\newtheorem{lemma}{Lemma}[section]
\newtheorem{proposition}[lemma]{Proposition}
\newtheorem{theorem}[lemma]{Theorem}
\newtheorem{corollary}[lemma]{Corollary}
\newtheorem{example}[lemma]{Example}
\theoremstyle{definition}
\newtheorem{definition}[lemma]{Definition}
\newtheorem{remark}[lemma]{Remark}
\newcommand{\N}{\mathds{N}}
\newcommand{\R}{\mathds{R}}
\newcommand{\C}{\mathds{C}}
\newcommand{\K}{\mathds{K}}
\renewcommand{\P}{\mathds{P}}
\newcommand{\E}{\mathds{E}}
\newcommand{\X}{\mathds{X}}
\newcommand{\topo}{\tau}
\newcommand{\calB}{\mathcal{B}}
\newcommand{\calE}{\mathcal{E}}
\newcommand{\calF}{\mathcal{F}}
\providecommand{\abs}[1]{\left\lvert#1\right\rvert}
\providecommand{\norm}[1]{\left\lVert#1\right\rVert}
\providecommand{\differential}{\mathrm{d}}
\renewcommand{\d}{\differential}
\renewcommand{\exp}{\mathrm{e}}
\newcommand{\from}{\colon}
\newcommand\rlim{
\mathchoice{\vcenter{\hbox{${\scriptstyle{+}}$}}}
{\vcenter{\hbox{$\scriptstyle{+}$}}}
{\vcenter{\hbox{$\scriptscriptstyle{+}$}}}
{\vcenter{\hbox{$\scriptscriptstyle{+}$}}}}
\DeclareMathAlphabet{\mathcal}{OMS}{cmsy}{m}{n}
\newcommand{\vertiii}[1]{{\left\vert\kern-0.25ex\left\vert\kern-0.25ex\left\vert #1 
    \right\vert\kern-0.25ex\right\vert\kern-0.25ex\right\vert}}
\begin{document}

\title[Subordination]{Subordination for sequentially equicontinuous equibounded $C_0$-semigroups}
\author[K.~Kruse]{Karsten Kruse}
\address{Hamburg University of Technology\\ Institute of Mathematics \\ Am Schwarzenberg-Campus~3 \\
21073 Hamburg \\
Germany}
\email{karsten.kruse@tuhh.de}
\author[J.~Meichsner]{Jan Meichsner}
\email{jan.meichsner@tuhh.de}
\author[C.~Seifert]{Christian Seifert}
\email{christian.seifert@tuhh.de}

\subjclass[2010]{Primary 47D06, Secondary 46A03, 46A70}

\keywords{subordination, $C_0$-semigroups, bi-continuous semigroups, sequential equicontinuity}

\date{\today}
\begin{abstract}
  We consider operators $A$ on a sequentially complete Hausdorff locally convex space $X$ such that $-A$ generates a (sequentially) equicontinuous equibounded $C_0$-semigroup.
  For every Bernstein function $f$ we show that $-f(A)$ generates a semigroup which is of the same `kind' as the one generated by $-A$.
  As a special case we obtain that fractional powers $-A^{\alpha}$, where $\alpha \in (0,1)$, 
  are generators.
\keywords{subordination \and $C_0$-semigroups \and bi-continuous semigroups \and sequential equicontinuity}
\end{abstract}
\maketitle

\section{Introduction}
\label{sect:intro}

In this paper we aim to generalise subordination of bounded $C_0$-semigroups from the well-known case of Banach spaces to sequentially complete Hausdorff locally convex spaces.

The theory of $C_0$-semigroups on Banach spaces is by now a classical topic, see e.g.\ the monographs \cite{Davies1980,EngelNagel2000,Goldstein1985,HillePhillips1957,Pazy1983}.
It has been generalised to locally convex spaces in various contexts in \cite[Chapter IX]{yosida1968} and \cite{budde2019,Choe1985,Dembart1974,federico2016,Jefferies1986,Jefferies1987,Komura1968,Kuehnemund2001}. 
Since on these spaces continuity and sequential continuity may differ, we work with the (weaker) notion of sequential (equi-) continuity as in \cite{federico2016}.

Subordination (in the sense of Bochner) for bounded $C_0$-semigroups on Banach spaces describes a technique to associate a new semigroup to a given one by integrating orbits against a convolution semigroup of measures.
It plays an important role in operator theory, functional calculus theory and stochastic processes, see e.g.\ \cite{bochner1949,phillips1952,Schilling1998}.
As is well-known (see also Proposition \ref{prop:Laplace-Transform}) these convolution semigroups of measures correspond via Laplace transform to the class of Bernstein functions, cf.\ the monograph \cite{schilling2012}.
It turns out that the generator of the subordinated semigroup can be described by means of the Bernstein function and the generator of the original semigroup \cite[Theorem 4.3]{phillips1952}.

Although the framework of $C_0$-semigroups on Banach spaces yields a rich theory as described above, for example even the classical heat semigroup on $C_{\rm b}(\R^n)$ does not fit in this context, 
however can be treated in our generalised setting; cf.\ Example \ref{ex:heat_semigroup} below.

Let us outline the content of the paper.
In Section \ref{chap:integration} we review the Pettis-integral which provides a suitable integral in our context of locally convex spaces, 
in particular those satifying the so-called metric convex compactness property, see Theorem \ref{thm:metric_convex_compactness_property} below.
The theory of locally sequentially equicontinuous, equibounded $C_0$-semigroups on sequentially complete Hausdorff locally convex spaces can be developped analogously to the 
classical theory of $C_0$-semigroups on Banach spaces, apart from the fact that the continuity properties need to be described by sequences.
We collect the facts needed in Section \ref{chap:equicontinoussgs}.
We then turn to subordination in this context in Section \ref{chap:subordination}.
After introducing Bernstein functions, the right class of functions for this purpose, we develop the theory of subordination in our generalised setting.
It turns out that the subordinated semigroup is again a locally sequentially equicontinuous, equibounded $C_0$-semigroup (see Proposition \ref{prop:subordinatedsg}) and 
that its generator can be related to the one of the original semigroup (see Theorem \ref{thm:representationonDA} and Corollary \ref{coro:generator_subordinated_semigroup}).
These are our main abstract results. We will then apply these results to bounded (locally) bi-continuous semigroups (as introduced in \cite{kuehnemund2003}) and to transition semigroups for Markov processes in Section \ref{chap:bi-continuous}.
In particular, the above-mentioned classical heat semigroup, also called Gau{\ss}-Weierstra{\ss} semigroup, on $C_{\rm b}(\R^n)$ fits into this context.

\section{Integration in locally convex spaces}
\label{chap:integration}

In this section we review the notion of the Pettis-integral.

\begin{definition}[Pettis-integral]
\label{def:integral}
	Let $(\Omega, \Sigma, \mu)$ be a measure space and $X$ be a Hausdorff locally convex space. 
	A function $f: \Omega \rightarrow X$ is called \emph{weakly (scalarly) essentially measurable} if the function $\langle x' , f \rangle: \Omega \rightarrow \K$, $\langle x' , f \rangle (\omega) := \langle x' , f(\omega) \rangle$ is essentially measurable (i.e.\ it has a measurable representative) for all $x' \in X'$. 
	Here $X'$ denotes the topological dual space of $X$ and $\langle \cdot , \cdot \rangle$ is the canonical pairing. 
	A weakly essentially measurable function is said to be \emph{weakly (scalarly) integrable} if $\langle x' , f \rangle \in L^1 (\Omega, \mu)$. 
	A weakly integrable function $f$ is called \emph{($\mu$-Pettis-) integrable} if
	\[
	 \exists x \in X \, \forall x' \in X': \langle x' , x \rangle = \int\limits_{\Omega} \langle x' , f(\omega) \rangle \mu(\d \omega). 
	\]  
	In this case $x$ is unique due to the Hausdorff property and we set 
	\[
	 \int\limits_{\Omega} f(\omega) \mu(\d \omega):=x.
	\]
	
\end{definition}

Recall that a Hausdorff locally convex space $X$ is said to have the \emph{metric convex compactness property} if for each metrisable compact subset $C\subseteq X$ also the closed convex hull $\overline{\operatorname{cx}} \, C$ of $C$ is compact. 
Note that if $X$ is sequentially complete and Hausdorff locally convex, then $X$ has the metric convex compactness property by \cite[Remark 4.1.b]{voigt1992}.

\begin{theorem}[{\cite[Theorem 0.1]{voigt1992}}]
\label{thm:metric_convex_compactness_property}
Let $X$ be a Hausdorff locally convex space. Then the following are equivalent.
\begin{enumerate}
	\item $X$ has the metric convex compactness property.
	\item If $\Omega$ is a compact metric space, $\mu$ a finite Borel measure on $\Omega$ and $f: \Omega \rightarrow X$ continuous, then $f$ is $\mu$-Pettis-integrable. 
\end{enumerate}	
\end{theorem}

If $X$ is a Hausdorff locally convex space with metric convex compactness property and $\mu$ a positive and finite Borel measure on $[0,\infty)$,
then by Theorem \ref{thm:metric_convex_compactness_property} every continuous function $f: [0, \infty) \rightarrow X$ is $\mu$-Pettis-integrable over every compact interval $[a,b] \subseteq [0, \infty)$ with $0\leq a<b$. 
In this case we can construct the Pettis-integral explicitly by using Riemann sums.
To that end, fix $0\leq a< b$ and for $n\in\N$ let $( a = x_0^{(n)}, \ldots , x_{k_n}^{(n)} = b)$ be a partition of $[a,b]$ such that $\max_k (x_k^{(n)}-x_{k-1}^{(n)}) \to 0$,
and $\{ \xi^{(n)}_i \mid \xi^{(n)}_i \in (x^{(n)}_{i-1}, x^{(n)}_i],\, i \in \{1, \dots, k_n \}\}$ a corresponding set of intermediate points. Then
\[
 \int\limits_{[a,b]} f (\lambda) \mu ( \d \lambda ) = \lim\limits_{n \rightarrow \infty} \sum\limits_{i=1}^{k_n} f \left( \xi^{(n)}_i \right) \mu \bigl( (x^{(n)}_{i-1}, x^{(n)}_i] \bigr) + f(a) \mu \bigl( \{a\} \bigr). 
\]
If $f$ is additionally bounded, by boundedness of $\mu$ we can also integrate $f$ with respect to $\mu$ over the entire interval $[0,\infty)$.

\begin{lemma}
\label{lemma:integrability}
  Let $X$ be a sequentially complete Hausdorff locally convex space, $\mu$ a finite Borel measure on $[0, \infty)$ and $f \in C_{\rm b}([0, \infty),X)$.
  Then $f$ is $\mu$-Pettis-integrable.
\end{lemma}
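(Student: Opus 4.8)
The plan is to integrate $f$ first over each bounded interval, where Theorem~\ref{thm:metric_convex_compactness_property} applies, and then to pass to the limit, the delicate point being that the resulting partial integrals converge in the original topology. First I would verify that $f$ is weakly integrable: for every $x'\in X'$ the scalar function $\langle x',f\rangle$ is continuous, hence Borel measurable, and bounded since $f$ is bounded; as $\mu$ is finite this gives $\langle x',f\rangle\in L^1([0,\infty),\mu)$.

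Next, since $X$ is sequentially complete it has the metric convex compactness property, so by Theorem~\ref{thm:metric_convex_compactness_property} the restriction of $f$ to the compact metric space $[0,n]$ is $\mu$-Pettis-integrable for every $n\in\N$. Writing $x_n:=\int_{[0,n]}f(\lambda)\,\mu(\d\lambda)$, the core of the argument is to show that $(x_n)_n$ is Cauchy in $X$ with respect to the original topology (and not merely weakly Cauchy, which sequential completeness cannot exploit). For this I would use the mean-value inclusion for the Pettis integral: a Hahn--Banach separation argument shows, for $m>n$,
\[
 x_m-x_n=\int_{(n,m]}f(\lambda)\,\mu(\d\lambda)\in \mu\bigl((n,m]\bigr)\cdot\overline{\operatorname{cx}}\,\{f(\lambda)\mid \lambda\in(n,m]\}.
\]
Since $f$ is bounded, $M_p:=\sup_{\lambda\ge 0}p(f(\lambda))<\infty$ for every continuous seminorm $p$, and $p$ is bounded by $M_p$ on the closed convex hull of the range; hence $p(x_m-x_n)\le M_p\,\mu\bigl((n,m]\bigr)$. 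Because $\mu$ is finite we have $\mu\bigl((n,\infty)\bigr)\to 0$, so $p(x_m-x_n)\to 0$ as $n,m\to\infty$ for each $p$, proving the Cauchy property.

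By sequential completeness there is $x\in X$ with $x_n\to x$. Finally I would identify $x$ as the Pettis integral: for every $x'\in X'$,
\[
 \langle x',x\rangle=\lim_{n\to\infty}\langle x',x_n\rangle=\lim_{n\to\infty}\int_{[0,n]}\langle x',f(\lambda)\rangle\,\mu(\d\lambda)=\int_{[0,\infty)}\langle x',f(\lambda)\rangle\,\mu(\d\lambda),
\]
where the last step is dominated convergence, valid since $\langle x',f\rangle\in L^1([0,\infty),\mu)$. This exhibits the element required in Definition~\ref{def:integral} and establishes $\mu$-Pettis-integrability of $f$. The main obstacle is the second paragraph: ruling out the failure of convergence in the locally convex topology, which I resolve through the convex-hull inclusion for the Pettis integral together with boundedness of $f$ and finiteness of $\mu$.
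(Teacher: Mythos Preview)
Your proof is correct and follows essentially the same approach as the paper: define $x_n:=\int_{[0,n]} f\,\d\mu$ via Theorem~\ref{thm:metric_convex_compactness_property}, show $(x_n)$ is Cauchy using the seminorm estimate $\norm{x_m-x_n}_p\le \sup_\lambda\norm{f(\lambda)}_p\cdot\mu((n,m])$, invoke sequential completeness, and identify the limit. The only cosmetic difference is that the paper obtains the Cauchy estimate directly from the Riemann-sum description of the Pettis integral given just before the lemma, whereas you use the Hahn--Banach mean-value inclusion; both yield the same bound.
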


\begin{proof}
	By the above we can integrate $f$ over every compact interval. 
	Consider the sequence $(x_n)$ defined by
	\[
	 x_n := \int\limits_{[0, n]} f(\lambda) \mu (\d \lambda) \quad (n\in \N).
	\]
	Let $n,m \in \N$, $m<n$. For every seminorm $\norm{\cdot}_p$ of the family of seminorms generating the topology of $X$ we have
	\[
	 \norm{x_n - x_m}_p \leq \norm{f}_{p,\infty} \mu \bigl( (m, n] \bigr),
	\]
	implying that $(x_n)$ is a Cauchy sequence.
	Since $X$ is sequentially complete, $(x_n)$ is convergent. Let $x:=\lim_{n\to\infty} x_n$.
	Then the equation $x = \int_{[0, \infty)} f(\lambda) \mu (\d \lambda)$ is verified by a direct calculation.  
\end{proof}

\section{Locally Sequentially Equicontinuous Semigroups}
\label{chap:equicontinoussgs}

Let $(X,\topo)$ be a Hausdorff locally convex space. 
The system of seminorms generating the topology $\topo$ will be denoted by $(\norm{\cdot}_p)_{p \in \mathcal{P}}$. W.l.o.g.\ we may assume that $(\norm{\cdot}_p)$ is directed. 		

\begin{definition}
  \label{defi:equicontinoussg}
  A \emph{semigroup} $(T_t)_{t\geq 0}$ of linear operators defined on $X$ is a family of linear operators $T_t\from X\to X$ for $t\geq 0$, such that $T_0 = I$ and $T_{s+t} = T_s T_t$ for all $s,t\geq 0$.
  A semigroup $(T_t)_{t \geq 0}$ on $X$ is said to be
  \begin{enumerate}[itemsep=4mm]
    \item
      a \emph{$C_0$-semigroup on $(X,\tau)$} if for all $x\in X$ we have $\lim\limits_{t \to 0\rlim} T_t x = x$,
    \item
    \label{prop:equicontinoussg1}
      \emph{locally sequentially equicontinuous} if for all sequences $(x_n)$ in $X$ such that $x_n \rightarrow 0$, $t_0 > 0$ and $p \in \mathcal{P}$ we have 
      $\lim\limits_{n \rightarrow \infty} \sup\limits_{t \in [0, t_0]} \norm{T_t x_n}_p = 0$,
    \item
    \label{prop:equicontinoussg2}
      \emph{locally equibounded} if for all bounded sets $B \subseteq X$, $t_0 > 0$ and $p\in\mathcal{P}$ it holds that  $\sup\limits_{\substack{x \in B \\  t \in [0,t_0]}} \norm{T_t x}_p < \infty$.
  \end{enumerate}	
  We drop the word `locally' if the properties (b) and (c) hold uniformly on $[0, \infty)$. 
\end{definition}

\begin{remark}
  If $(T_t)$ is a (locally) sequentially equicontinuous semigroup, then $(T_t)$ is (locally) equibounded (\cite[Propositions A.1 (iii)]{federico2016}).
  Moreover, if $(T_t)$ is a locally sequentially equicontinuous $C_0$-semigroup on $(X, \tau)$, then the mapping $[0,\infty)\ni t\mapsto T_tx\in X$ is continuous for all $x\in X$.
\end{remark}

Let $(T_t)$ be a locally sequentially equicontinuous, equibounded $C_0$-semigroup on $(X,\topo)$.  
As in the case of $C_0$-semigroups on Banach spaces we define the \emph{generator} $-A$ of $(T_t)$ by
\begin{align*}
  \mathcal{D}(A)& := \bigl\{x\in X \mid \lim_{t\to 0\rlim} \tfrac{1}{t} (T_tx-x) \,\text{exists} \bigr\},\\
  -Ax & := \lim_{t\to 0\rlim} \tfrac{1}{t} (T_tx-x) \quad(x\in \mathcal{D}(A)).
\end{align*}

\begin{remark}
 There is no common agreement whether to use the here presented definition of a generator or its negative. Throughout the entire paper we will stick to the above made definition, i.e.\ $-A$ is the generator. 
\end{remark}

Let $\rho_0(-A)\subseteq \C$ be the set of all elements $\lambda\in \C$ such that the operator $\lambda + A $ has a sequentially continuous inverse (note that we allow for complex values here). 

Let us collect some basic facts for $(T_t)$ and $A$.

\begin{lemma}[see {\cite[Propositions 3.10, 3.11, Theorem 3.14, Corollary 3.15 and Corollary 3.16]{federico2016}}]
\label{lem:properties_generator}
  Let $X$ be a sequentially complete Hausdorff locally convex space, $(T_t)$ a locally sequentially equicontinuous, equibounded $C_0$-semigroup on $(X,\topo)$ with generator $-A$.
  Then
  \begin{enumerate}
    \item
      $\mathcal{D}(A)$ is sequentially dense in $X$,
    \item
     for $x\in \mathcal{D}(A)$ we have $T_t x\in \mathcal{D}(A)$ for all $t \geq 0$, $t\mapsto T_t x$ is differentiable and $\frac{\d}{\d t} T_tx = -AT_tx = -T_tAx$ for all 
     $t \geq 0$,    
    \item
     $(0,\infty) \subseteq  \rho_0(-A)$ and
     \[
	 (\lambda + A)^{-1}x = \int\limits_{(0, \infty)} \exp^{-\lambda t} T_t x \, \d t \quad (x\in X, \lambda > 0),
     \]    
    \item
     $A$ is sequentially closed,
    \item
     for all $x \in X$ one has $\lim\limits_{\lambda \rightarrow \infty} \lambda (\lambda + A)^{-1}x = x$. 
  \end{enumerate}
\end{lemma}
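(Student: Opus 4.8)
The backbone of the whole lemma is a pair of integral constructions made available by Lemma \ref{lemma:integrability} and Remark \ref{rem:HDI}: the averaged element $x_t := \frac{1}{t}\int_{[0,t]} T_s x\,\d s$ and the candidate resolvent $R_\lambda x := \int_{[0,\infty)} \exp^{-\lambda t} T_t x\,\d t$. Both are well defined because $s \mapsto T_s x$ is continuous and, after multiplying by the integrable weight $\exp^{-\lambda t}$, bounded. The key computational fact I would isolate first is that a sequentially continuous linear operator commutes with the Pettis integral: applying such an operator to the Riemann sums described before Lemma \ref{lemma:integrability} produces Riemann sums for the transformed integrand, and sequential continuity passes to the limit. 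Since local sequential equicontinuity makes each $T_t$ sequentially continuous, this licenses all the subsequent manipulations.

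For (a) I would show $x_t \in \mathcal{D}(A)$ with $A x_t = \frac{1}{t}(T_t x - x)$ by rewriting $\frac{1}{h}(T_h x_t - x_t)$ as $\frac{1}{th}\bigl(\int_{[t,t+h]} T_s x\,\d s - \int_{[0,h]} T_s x\,\d s\bigr)$ and letting $h \to 0\rlim$, using Remark \ref{rem:HDI} and the $C_0$-property; since $x_t \to x$ as $t \to 0\rlim$, choosing $t_n \to 0$ yields a sequence in $\mathcal{D}(A)$ converging to $x$. For (b), pulling $T_t$ out of the difference quotient gives $T_t x \in \mathcal{D}(A)$ and $A T_t x = T_t A x$, and hence the right derivative of $t \mapsto T_t x$; the left derivative requires passing to the limit in the product $T_{t-h}\,\frac{1}{h}(T_h x - x)$ in which both factors move, and this is exactly where local sequential equicontinuity is needed.

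Parts (c)--(e) revolve around $R_\lambda$. I would verify $(\lambda - A) R_\lambda x = x$ by a shift-and-change-of-variables computation on the defining integral, and $R_\lambda(\lambda - A)x = x$ for $x \in \mathcal{D}(A)$ via integration by parts using the identity $\frac{\d}{\d t}T_t x = T_t A x$ from (b). Sequential continuity of $R_\lambda$ then follows from $\norm{R_\lambda x_n}_p \le \int_{[0,\infty)} \exp^{-\lambda t}\norm{T_t x_n}_p\,\d t$ split at some $t_0$, where the head tends to zero by sequential equicontinuity and the tail is controlled by global equiboundedness of the convergent, hence bounded, set $\{x_n\}$; this establishes (c). Sequential closedness (d) then follows by applying the sequentially continuous $R_\lambda$ to $(\lambda - A) x_n \to \lambda x - y$ and comparing with $x_n \to x$. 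Finally (e) comes from the substitution $\lambda R_\lambda x = \int_{[0,\infty)} \exp^{-s} T_{s/\lambda} x\,\d s$, so that $\norm{\lambda R_\lambda x - x}_p \le \int_{[0,\infty)} \exp^{-s}\norm{T_{s/\lambda} x - x}_p\,\d s$, again split into a head handled by the $C_0$-property and a tail handled by equiboundedness.

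The main obstacle throughout is not any single identity but the systematic replacement of the Banach-space toolkit---norm bounds, dominated convergence, continuity of bounded operators---by its sequential locally convex analogues. Concretely, every limit passage either inside a Pettis integral or across a moving product of operators must be justified seminorm by seminorm, using the head/tail splitting against sequential equicontinuity and global equiboundedness, and one must consistently track that the inverses produced are \emph{sequentially} continuous, which is the relevant notion for $\rho_0(A)$, rather than merely continuous.
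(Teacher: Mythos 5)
The paper gives no proof of this lemma at all---it is quoted directly from \cite{federico2016}---so there is nothing internal to compare against; your sketch is the standard Banach-space argument correctly transposed to the sequential locally convex setting, and it is essentially the route the cited reference takes. The two points that genuinely require care here, namely commuting sequentially continuous operators with Pettis integrals via Riemann sums (rather than via duality, which would need full continuity of $T_t$), and the head/tail splitting of the integrals against local sequential equicontinuity plus global equiboundedness, are both correctly identified and handled in your proposal.
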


\begin{remark} \leavevmode
 \begin{enumerate}
  \item
   Note that compared to \cite{federico2016}, we only assume local sequential equicontinuity for $(T_t)$. However, this does not affect the results and proofs.
  \item
   Unless we assume $(T_t)$ to be (sequentially) equicontinuous on the whole of $[0, \infty)$, we cannot show that the resolvent family ${\bigl( \lambda (\lambda + A )^{-1} \bigr)_{\lambda > 0}}$ corresponding to 
   $(T_t)$ is (sequentially) equicontinuous.
   However, for $\varepsilon > 0$ the rescaled resolvent family $\bigl( \lambda (\lambda + A + \varepsilon)^{-1} \bigr)_{\lambda>0}$ corresponding to the (sequentially) equicontinuous semigroup 
   $(\exp^{-\varepsilon t} T_t)$ is (sequentially) equicontinuous. 
 \end{enumerate}
\end{remark}	

\section{Subordination for equicontinuous $C_0$-Semigroups}
\label{chap:subordination}

We start with the definition of the class of functions in which we will plug in the negative of a generator of a locally sequentially equicontinuous, equibounded $C_0$-semigroup in order to get a new generator. 

\begin{definition}[Bernstein function]
	Let $f\from(0,\infty)\to[0,\infty)$. 
	Then $f$ is called \emph{Bernstein function} provided $f \in C^\infty(0,\infty)$ and $(-1)^{k-1} f^{(k)} \geq 0$ for all $k\in\N$.
\end{definition}

Bernstein functions appear in a vast number of fields  such as probability theory, harmonic analysis, complex analysis and operator theory under different names, e.g.\ Laplace exponents, negative definite functions or Pick, Nevanlinna or Herglotz functions (complete Bernstein functions, cf.\ \cite{schilling2012}).  
They allow for a very useful representation formula in terms of measures.  

\begin{proposition}[{\cite[Theorem 3.2]{schilling2012}}]
  \label{prop:levy_khintchine}
  Let $f\from(0,\infty)\to [0,\infty)$. The following are equivalent.
  \begin{enumerate}
    \item
      $f$ is a Bernstein function.
    \item
      There exist constants $a,b\geq 0$ and a positive Radon measure $\mu$ on $(0,\infty)$ satisfying $\int_{(0,\infty)} 1\wedge t\,\mu(\d t) <\infty$ such that
      \begin{equation}
	f(\lambda) = a + b \lambda + \int\limits_{(0,\infty)} \bigl(1-e^{-\lambda t}\bigr)\,\mu(\d t) \quad (\lambda > 0) \label{eq:representation}.
      \end{equation}
  \end{enumerate}
\end{proposition}
The representation of $f$ in \eqref{eq:representation} is called \emph{L\'{e}vy-Khintchine representation}. The function $f$ determines the two numbers $a$, $b$ and the measure $\mu$ uniquely. The triplet $(a,b,\mu)$ is called \emph{L\'{e}vy triplet} of $f$. 

Every Bernstein function admits a continuous extension to $[0, \infty)$ since by applying dominated convergence to the representation formula \eqref{eq:representation} one gets $f(0\rlim) = a$. 

\begin{example}
  Let $\alpha\in (0,1)$ and $f\from (0,\infty)\to [0,\infty)$ be defined by $f(x):=x^\alpha$ for all $x>0$.
  Then $f$ is a Bernstein function with L\'{e}vy triplet $(0,0,\mu)$, where for measurable sets $B\subseteq (0,\infty)$ we have
  \[\mu(B) := \frac{-1}{\Gamma (-\alpha)} \int\limits_B t^{-1-\alpha}\, \d t.\]
  Hence, 
  \[
  	x^{\alpha} = \frac{1}{\Gamma (-\alpha)} \int\limits_{(0, \infty)} \bigl( e^{-xt} - 1 \bigr) t^{-1-\alpha} \d t. 
  \]
\end{example}

Let us now turn to a concept closely related to Bernstein functions. 

\begin{definition}
\label{def:convolutionsemigroup}
  Let $(\mu_t)_{t \geq 0}$ be a family of Radon measures on $[0,\infty)$, $\mu$ a Radon measure on $[0,\infty)$. Then $(\mu_t)$ is called 
  \begin{enumerate}
    \item
      a family of \emph{sub-probability measures} if $\forall t \in [0, \infty): \, \mu_t \big( [0, \infty) \big) \leq 1$,
    \item
      \emph{convolution semigroup} if $\mu_0 =\delta_0$ and $\forall s,t \in [0, \infty): \, \mu_t \ast \mu_s = \mu_{t+s}$,  
    \item
      \label{def:convolutionsemigroup:apoint}
      \emph{vaguely continuous at $s \in [0, \infty)$} with limit $\mu$ if 
      \[
	\forall f \in C_{\rm c}[0, \infty): \, \lim\limits_{t \rightarrow s }\int\limits_{[0, \infty)} f(\lambda) \,\mu_t (\d \lambda) =  
	\int\limits_{[0, \infty)} f(\lambda) \,\mu (\d \lambda). 
      \] 
  \end{enumerate}	 	
\end{definition}	

\begin{remark} \leavevmode
\label{remark:vaguely-weak}
  \begin{enumerate} 
    \item\label{remark:vaguely-weak:item:1}
      A family $(\mu_t)$ of sub-probability measures which is vaguely continuous at $0$ with limit $\delta_0$ is also \emph{weakly continuous at $0$}, i.e.~(c) in 
      Definition \ref{def:convolutionsemigroup} actually holds for all $f \in C_{\rm b}[0, \infty)$.
      In order to see this, choose $f \in C_{\rm c}[0, \infty)$ such that $0\leq f \leq 1$ and $f=1$ in a neighbourhood of $0$. Then
      \[
      1 \geq \mu_t \big( [0, \infty) \big) \geq \int\limits_{[0, \infty)} f(\lambda) \, \mu_t (\d \lambda)  \rightarrow f(0) = 1 \text{ as } t \rightarrow 0\rlim, 
      \] 
      i.e.\ $\mu_t\bigl([0,\infty)\bigr)\to 1$.
      By \cite[Theorem A.4]{schilling2012} this implies weak continuity at $0$.
    \item
      Let $(\mu_t)$ be a convolution semigroup of sub-probability measures which is vaguely continuous at $0$ with limit $\delta_0$. Then $(\mu_t)$ is vaguely continuous at every point $s \geq 0$ with limit $\mu_s$.
      Indeed, we can define a contractive semigroup via
      \[
      	(T_t f)(x) := \int\limits_{[0, \infty)} f(x+\lambda) \,\mu_t(\d \lambda) \quad(x\in [0,\infty), f\in C_0[0,\infty))
      \]
      on the Banach space $C_0 [0, \infty) = \overline{C_{\rm c} [0, \infty)}$. 
      We claim that $(T_t)$ is strongly continuous. 
      Then vague continuity of $(\mu_t)$ follows since this also implies weak continuity and $\delta_0\in C_0[0,\infty)'$.
      To show the claim, let $f\in C_0[0,\infty)$. For $\varepsilon>0$, by uniform continuity of $f$, there is $\delta>0$ such that $\abs{f(x+\lambda)-f(x)}<\varepsilon$ if $\lambda\in [0,\delta)$ and $x\in [0,\infty)$. 
      By (a), one actually sees $\mu_t([0,\delta))\to 1$ and consequently $\mu_t([\delta,\infty))\to 0$ as $t\to 0\rlim$. Thus,
      \begin{align*}
        & \abs{T_tf(x)-f(x)}= \Bigl{|}\int\limits_{[0,\infty)} f(x+\lambda)\,\mu_t(\d \lambda) - f(x)\Bigr{|} \\
        & \leq \int\limits_{[0,\delta)} \abs{ f(x+\lambda) - f(x)}\,\mu_t(\d \lambda) + 2\norm{f}_\infty \mu_t([\delta,\infty)) + \norm{f}_{\infty} (1-\mu_t([0,\infty)))\\
        & \leq \varepsilon + 2\norm{f}_\infty \mu_t([\delta,\infty)) + \norm{f}_{\infty} (1-\mu_t([0,\infty))), 
      \end{align*}
      which can be made arbitrarily small, uniformly in $x$. 
      The estimate shows strong continuity at $t=0$ and by standard arguments this holds for all $t \geq 0$. 
  \end{enumerate}
\end{remark}

Every Bernstein function is naturally associated to a family $(\mu_t)$ of sub-proba\-bility measures which form a vaguely (and hence weakly) continuous convolution semigroup and vice versa. 

\begin{proposition}[{\cite[Theorem 5.2]{schilling2012}}]
\label{prop:Laplace-Transform}
  Let $(\mu_t)$ be a convolution semigroup of sub-proba\-bility measures on $[0, \infty)$ 
   which is vaguely continuous at $0$ with limit $\delta_0$. 
  Then there exists a unique Bernstein function $f$ such that for all $t\geq 0$ the Laplace transform of $\mu_t$ is given by 
  \[
    \mathcal{L} (\mu_t) = \exp^{-tf}. 
  \]
  Conversely, given any Bernstein function $f$, there exists a unique vaguely continuous convolution semigroup $(\mu_t)$ of sub-probability measures on $[0, \infty)$ such that the above equation holds. 
\end{proposition}

By the above proposition we obtain that the sub-probability measures $\mu_t$ are probability measures if and only if $f(0\rlim) = 0$, since
\begin{equation}
  \mu_t \big( [0, \infty) \big) = \lim\limits_{\lambda \rightarrow 0\rlim} \int\limits_{[0, \infty)} \exp^{-\lambda s} \mu_t (\d s) = \lim\limits_{\lambda \rightarrow 0\rlim} \exp^{-tf(\lambda)} = \exp^{-tf(0\rlim)}. \label{eq:subprobabilitymeasures}
\end{equation}

\begin{example}
  Let $\alpha\in (0,1)$ and $f\from (0,\infty)\to [0,\infty)$ be defined by $f(x):=x^\alpha$ for all $x>0$.
  Then for $t > 0$ the measure $\mu_t$ has a density $g_t$ w.r.t.\ the Lebesgue measure given by
  \[
  	g_t (s) =  \frac{1}{2 \pi i} \int\limits_{\gamma_{\Theta}} \exp^{-tw^{\alpha}} \exp^{s w} \d w \quad (s > 0), 
  \]
  where $\gamma_{\Theta} = \gamma_{\Theta}^+ \cup \gamma_{\Theta}^-$ is parametrised by
  \[
  	\gamma_{\Theta}^{-}(r) := -r \exp^{-i \Theta} \quad (r\in (-\infty,0)),\qquad \gamma_{\Theta}^{+}(r) := r \exp^{i \Theta} \quad (r\in (0,\infty))
  \]
  and $\Theta\in[\pi /2 , \pi]$.
  \begin{figure}[htb]
    \centering
    \begin{tikzpicture}[>=stealth,scale=0.75]
      \draw[->] (-3,0)--(3.1,0) node[right]{$\mathrm{Re}$};
      \draw[->] (0,-3)--(0,3.1) node[above]{$\mathrm{Im}$};
      \draw[->] (-1.5,-3)--(-0.5,-1);
      \draw[->] (-0.5,-1)--(0,0)--(-0.5,1);
      \draw (-0.5,1)--(-1.5,3);
      \draw (0.75,0) arc(0:116.565051177078:0.75);
      \draw (0.75,0) arc(0:-116.565051177078:0.75);
      \draw (0.2,0.35) node{$\Theta$};
      \draw (0.2,-0.35) node{$\Theta$};
      \draw (-0.6,-2) node{$\gamma_{\Theta}^-$};
      \draw (-0.6,2) node{$\gamma_{\Theta}^+$};
    \end{tikzpicture}  
  \end{figure}
  
  For $\alpha=\tfrac{1}{2}$ one can explicitly calculate the integral and finds (see \cite[p. 259-268]{yosida1968} for details)
  \[
  	g_t (s) = \frac{t \exp^{-t^2/(4s)}}{2\sqrt{\pi}s^{3/2}}  \quad (s > 0).
  \] 
\end{example}

Recall that a family $(\mu_t)_{t\in I}$ of sub-probability measures on $[0,\infty)$, where $I \subseteq [0, \infty)$, is called \emph{uniformly tight} if for all $\varepsilon>0$ there exists $K\subseteq [0,\infty)$ compact such that
\[\sup_{t\in I} \mu_t\bigl([0,\infty)\setminus K\bigr)\leq \varepsilon.\]

\begin{lemma}
\label{lemma:tightness}
  Let $(\mu_t)_{t \geq 0}$ be a weakly continuous family of sub-probability measures on $[0,\infty)$ and $J \subseteq [0, \infty)$ be compact. 
  Then the sub-family $(\mu_t)_{t \in J}$ is uniformly tight. 
\end{lemma}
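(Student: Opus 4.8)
The plan is to prove uniform tightness of $(\mu_t)_{t\in J}$ by exploiting the weak continuity of the family together with the compactness of $J$, via a covering argument. The key observation is that uniform tightness over a compact parameter set should reduce to tightness of each individual measure $\mu_t$ (which follows from the fact that each $\mu_t$ is a finite Radon measure on $[0,\infty)$, hence inner regular) combined with a local equicontinuity-type estimate that lets finitely many such tightness bounds control the whole family.

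\textbf{Main approach.} Fix $\varepsilon>0$. First I would observe that each sub-probability measure $\mu_t$ is a finite Radon, hence inner regular, Borel measure, so for each fixed $t\in J$ there is a compact set $K_t=[0,R_t]\subseteq[0,\infty)$ with $\mu_t\bigl([0,\infty)\setminus K_t\bigr)\leq \varepsilon/2$. The difficulty is turning these pointwise tightness bounds into a bound uniform over all $t\in J$. To this end I would choose, for each $t$, a function $g_t\in C_{\mathrm c}[0,\infty)$ with $0\leq g_t\leq 1$, $g_t=1$ on $[0,R_t]$, so that $\int_{[0,\infty)} g_t\,\d\mu_t \geq \mu_t([0,R_t]) \geq 1-\varepsilon/2$ when $\mu_t$ is a probability measure, or more generally $\mu_t([0,\infty)\setminus[0,R_t])\leq 1-\int g_t\,\d\mu_t$ (handling the sub-probability case carefully). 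By weak continuity at $t$, the map $s\mapsto \int_{[0,\infty)} g_t\,\d\mu_s$ is continuous, so there is an open neighbourhood $U_t$ of $t$ in $J$ on which $\int_{[0,\infty)} g_t\,\d\mu_s \geq \int_{[0,\infty)} g_t\,\d\mu_t - \varepsilon/2$. Since $\spt g_t\subseteq [0,R_t]$, this controls $\mu_s\bigl([0,\infty)\setminus[0,R_t]\bigr)$ for all $s\in U_t$ by roughly $\varepsilon$.

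\textbf{Compactness step.} The sets $\{U_t\}_{t\in J}$ form an open cover of the compact set $J$, so I would extract a finite subcover $U_{t_1},\dots,U_{t_N}$. Setting $R:=\max\{R_{t_1},\dots,R_{t_N}\}$ and $K:=[0,R]$, every $s\in J$ lies in some $U_{t_j}$, whence $\mu_s\bigl([0,\infty)\setminus K\bigr)\leq \mu_s\bigl([0,\infty)\setminus[0,R_{t_j}]\bigr)\leq \varepsilon$. This gives the desired single compact $K$ working uniformly for all $t\in J$.

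\textbf{Main obstacle.} I expect the technical care to lie in the bookkeeping of the sub-probability (rather than probability) case: since $\mu_t([0,\infty))$ may be strictly less than $1$, the inequality relating $\int g_t\,\d\mu_s$ to the tail mass $\mu_s([0,\infty)\setminus[0,R_{t_j}])$ must be written as $\mu_s\bigl([0,\infty)\setminus[0,R_{t_j}]\bigr) = \mu_s([0,\infty)) - \mu_s([0,R_{t_j}]) \leq 1 - \int_{[0,\infty)} g_{t_j}\,\d\mu_s$, and one must verify the constants combine to yield $\varepsilon$ rather than something larger. Choosing the bump functions and splitting $\varepsilon$ appropriately (e.g.\ into thirds) across the pointwise tightness estimate, the weak-continuity perturbation, and any slack from sub-probability mass is the delicate part; the covering argument itself is routine once the local estimate is in place.
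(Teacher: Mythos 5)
Your covering strategy is sound in outline and genuinely different from the paper's proof, which is a one-line appeal to the converse direction of Prohorov's theorem: by compactness of $J$ and continuity of $t\mapsto\mu_t$ for the weak topology, every sequence $(\mu_{t_n})$ with $t_n\in J$ has a weakly convergent subsequence, and Prohorov then delivers uniform tightness. Your argument avoids Prohorov entirely, but the place you yourself flag as delicate is exactly where it breaks as written. The inequality you commit to, $\mu_s\bigl([0,\infty)\setminus[0,R_{t_j}]\bigr)\leq 1-\int g_{t_j}\,\d\mu_s$, does not combine to give $\varepsilon$ for sub-probability measures: your lower bound for $\int g_{t_j}\,\d\mu_s$ is (roughly) $\mu_{t_j}\bigl([0,R_{t_j}]\bigr)-\varepsilon/2\geq\mu_{t_j}\bigl([0,\infty)\bigr)-\varepsilon$, so the right-hand side is only controlled by $1-\mu_{t_j}\bigl([0,\infty)\bigr)+\varepsilon$, which is useless whenever the total masses are bounded away from $1$. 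This case is not vacuous in the paper: by \eqref{eq:subprobabilitymeasures} one has $\mu_t\bigl([0,\infty)\bigr)=\exp^{-tf(0\rlim)}<1$ whenever $a=f(0\rlim)>0$. There is also a smaller slip: a continuous $g_t$ with $g_t=1$ on $[0,R_t]$ cannot have $\spt g_t\subseteq[0,R_t]$; you need $\spt g_t\subseteq[0,R_t']$ for some $R_t'>R_t$, and only for that larger interval does $\1_{[0,\infty)\setminus[0,R_t']}\leq 1-g_t$ hold (your displayed identity/inequality is actually false for $[0,R_{t_j}]$ itself, since $g_{t_j}$ may carry mass beyond $R_{t_j}$).

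Both defects are repaired by one observation that keeps your argument intact: the hypothesis is \emph{weak} (not merely vague) continuity, so you may test against $1-g_t\in C_{\rm b}[0,\infty)$ directly. Since $\int(1-g_t)\,\d\mu_t\leq\mu_t\bigl([0,\infty)\setminus[0,R_t]\bigr)\leq\varepsilon/2$, weak continuity gives a relatively open neighbourhood $U_t$ of $t$ in $J$ on which $\int(1-g_t)\,\d\mu_s\leq\varepsilon$, and then $\mu_s\bigl([0,\infty)\setminus[0,R_t']\bigr)\leq\int(1-g_t)\,\d\mu_s\leq\varepsilon$ for all $s\in U_t$; the finite subcover and $K:=[0,\max_j R_{t_j}']$ finish the proof exactly as you describe. (Equivalently, add to your choice of $U_t$ the requirement that $\mu_s\bigl([0,\infty)\bigr)$ lie within $\varepsilon/4$ of $\mu_t\bigl([0,\infty)\bigr)$ — again legitimate because $\1\in C_{\rm b}[0,\infty)$ — which replaces the constant $1$ in your inequality by $\mu_t\bigl([0,\infty)\bigr)+\varepsilon/4$ and makes the constants close.) With this correction your proof is a valid, more elementary alternative to the paper's Prohorov argument.
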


\begin{proof}
  This is just a direct application of Prohorov's theorem \cite[Theorem 8.6.2]{bogachev2006} for which we need to show the existence of a weakly convergent subsequence in a given sequence $( \mu_{t_n})$.
  But this follows from the fact that the mapping $t \mapsto \mu_t$ is continuous with respect to the weak topology of measures and the compactness of $J$.   
\end{proof}

Analogously to the case of bounded $C_0$-semigroups on Banach spaces (see \cite[Proposition 13.1]{schilling2012}) we can construct a new (locally) sequentially equicontinuous, equibounded $C_0$-semi\-group from an existing one using a vaguely continuous convolution semigroup $(\mu_t)$ of sub-probability measures.

\begin{proposition}
\label{prop:subordinatedsg}
  Let $X$ be a sequentially complete Hausdorff locally convex space, $(T_t)$ be a (locally) sequentially equicontinuous, equibounded $C_0$-semigroup on $(X,\topo)$ and $(\mu_t)$ be a 
  convolution semigroup of sub-pro\-ba\-bi\-li\-ty measures which is vaguely continuous at $0$ with limit $\delta_0$.
  For $t\geq 0$ define $S_t\from X\to X$ by
  \begin{equation}
  \label{eq:subordinatedsg}
    S_t x := \int\limits_{[0, \infty)} T_s x \, \mu_t (\d s) \quad(x\in X).
  \end{equation}
  Then $(S_t)$ is a (locally) sequentially equicontinuous, equibounded $C_0$-semigroup on $(X,\topo)$. 
\end{proposition}

We will call $(S_t)$ the \emph{subordinated semigroup to $(T_t)$ w.r.t.\ $f$}, where $f$ is the Bernstein function associated to $(\mu_t)$. 

\begin{proof}
  Let $t\geq 0$ and $(\norm{\cdot}_p)_{p \in \mathcal{P}}$ the family of seminorms generating the topology $\topo$ of $X$. 
  By Lemma \ref{lemma:integrability} and equiboundedness and strong continuity of $(T_t)$ the above integral exists. 
  The linearity of $S_t$ is clear. 
  To show equiboundedness of $(S_t)$, for a bounded set $B \subseteq X$ and a seminorm $\norm{\cdot}_p$ we observe
  \[
    \sup\limits_{\substack{x \in B \\ t \in [0, \infty) }} \norm{S_t x}_p \leq \sup\limits_{\substack{x \in B \\ s \in [0, \infty) }} \norm{T_s x}_p.   
  \]
  Since $(T_t)$ is equibounded, so is $(S_t)$.
  The semigroup property of $(S_t)$ is inherited from the semigroup property of $(\mu_t)$ and of $(T_t)$. Indeed, let $s,t \geq 0$. For $x \in X$, $x' \in X'$ we have
  \begin{align*}
    \langle x' , S_t S_s x \rangle 
    & = \int\limits_{[0, \infty)}\quad\mathclap{\int\limits_{[0, \infty)}}\;\;\; \langle x', T_u T_v x \rangle \mu_s(\d u) \mu_t(\d v)
    = \int\limits_{[0, \infty)} \quad\mathclap{\int\limits_{[0, \infty)}}\;\;\; \langle x', T_{u+v} x \rangle \mu_s(\d u) \mu_t(\d v) \\
    & = \int\limits_{[0, \infty)}  \langle x', T_{w} x \rangle (\mu_s \ast \mu_t)(\d w)
    = \int\limits_{[0, \infty)}  \langle x' , T_{w} x \rangle \mu_{s + t}(\d w)
    = \langle x' , S_{t+s} x \rangle.
  \end{align*}
  
  For the strong continuity of $(S_t)$ let $x\in X$. For $p \in \mathcal{P}$ we estimate
  \[
    \norm{S_t x - x}_p \leq \int\limits_{[0, \infty)} \norm{T_s x - x}_p \mu_t (\d s) + \Bigl( 1 - \mu_t \bigl([0, \infty) \bigr) \Bigr) \norm{x}_p \to 0,
  \]
  since $[s\mapsto\norm{T_s x - x}_p]\in C_{\rm b}[0,\infty)$ with value $0$ at $0$ and $\mu_t\to \delta_0$ weakly.
  
  It remains to show that $(S_t)$ is (locally) sequentially equicontinuous.
  Let $(x_n)$ in $X$ be such that $x_n \to 0$.
  Let $t_0>0$, $p \in \mathcal{P}$ and $\varepsilon>0$.
  By Remark \ref{remark:vaguely-weak} $(\mu_t)$ is weakly continuous. Hence, by Lemma \ref{lemma:tightness} and equiboundedness of $(T_t)$ we can choose $s_0\geq 0$ such that
  \[
    \sup_{t\in [0,t_0]} \mu_t \bigl([s_0,\infty)\bigr) \cdot \sup\limits_{\substack{n \in \N \\ s \in [0, \infty)}} \norm{T_s x_n}_p \leq \frac{\varepsilon}{2}.
  \]
  By virtue of the local sequential equicontinuity of $(T_s)$ there exists $n_0 \in \N$ such that
  \[
    \sup\limits_{s \in [0, s_0]} \norm{T_s x_n}_p \leq \frac{\varepsilon}{2}
  \] 
  for all $n \geq n_0$. 
  Hence, for $n \geq n_0$ we obtain
  \begin{align*}
    \sup_{t\in [0,t_0]} \norm{S_t x_n}_p & \leq \sup_{t\in [0,t_0]} \int\limits_{[0,\infty)} \norm{T_sx_n}_p\, \mu_t(\d s)\\
    & \leq \sup_{t\in [0,t_0]} \int\limits_{[0, s_0]} \norm{T_s x_n}_p \, \mu_t (\d s) + \sup_{t\in [0,t_0]} \int\limits_{(s_0,\infty)} \norm{T_s x_n}_p \,\mu_t(\d s)\\
   & \leq \frac{\varepsilon}{2} + \frac{\varepsilon}{2} = \varepsilon.
  \end{align*}
  In case $(T_s)$ is even equicontinuous we may directly choose $n_0\in\N$ such that 
  \[
   \sup\limits_{s \in [0, \infty)} \norm{T_s x_n}_p \leq \varepsilon,
  \]  
  holds for $n \geq n_0$. 
  Hence, it follows that
  \[
   \sup\limits_{t \in [0, \infty)} \norm{S_t x_n}_p \leq \sup\limits_{t \in [0, \infty)} \int\limits_{[0,\infty)} \norm{T_sx_n}_p\, \mu_t(\d s) \leq \varepsilon ,
  \]
  which finishes the proof.
\end{proof}

\begin{definition}
  Let $X$ be a sequentially complete Hausdorff locally convex space, $(T_t)$ a (locally) sequentially equicontinuous, equibounded $C_0$-semigroup on $(X, \topo)$ with generator $-A$
  and $f$ a Bernstein function. 
  Then we will denote the generator of the subordinated semigroup $(S_t)$ by $-A^f$. 
\end{definition}

Our next goal is to represent the generator $-A^f$ of a subordinated semigroup $(S_t)$ for a given Bernstein function $f$ as it was performed in \cite[Eq.\ (13.10)]{schilling2012} for Banach spaces. We need some preparation. 
To begin with, we need to show that the function $s \mapsto T_s x-x$ can be approximated linearly in a neighbourhood of $s=0$ and thus is capable to compensate the measure appearing in the L\'{e}vy triplet $(a,b, \mu)$ which is singular at $s=0$. 

\begin{proposition}
  Let $X$ be a sequentially complete Hausdorff locally convex space, $(T_t)$ a (locally) sequentially equicontinuous, equibounded $C_0$-semigroup on $(X,\topo)$ with generator $-A$, 
  $f$ a Bernstein function with L\'{e}vy triplet $(a,b, \mu)$, and $x \in \mathcal{D}(A)$. 
  Then 
  \[
    (0, \infty) \ni s \mapsto T_s x-x \in X
  \]
  is $\mu$-Pettis-integrable.
  \label{prop:pettis_integrability_of_Ttx-x}
\end{proposition}

\begin{proof}
  Since $x \in \mathcal{D}(A)$, the mapping $s \mapsto T_s x$ is differentiable and 
  \[
    \frac{\d}{\d s} T_s x = - A T_s x = - T_s Ax \quad(s>0), 
  \]
  see Lemma \ref{lem:properties_generator}.
  Hence,
  \[
    T_t x-x = - \int\limits_{(0, t)} AT_s x \,\d s \quad (t\geq 0)
  \]
  and thus for every $x' \in X'$ there is a continuous seminorm $\norm{\cdot}_p$ (remember we assumed the family of seminorms to be directed) and a constant $C\geq 0$ such that
  \[
    \abs{\langle x' , T_t x-x \rangle} \leq C \norm{T_t x -x}_p \leq C \Bigl( \sup\limits_{s \in [0, \infty)} \norm{T_s Ax}_p \cdot t  \Bigr)\wedge \Bigl(\sup\limits_{s \in [0, \infty)} \norm{T_s x}_p + \norm{x}_p\Bigr)
  \]
  for all $t\geq 0$. Hence, $s \mapsto T_s x-x$ is $\mu$-weakly integrable by Proposition \ref{prop:levy_khintchine}.
  Further, by Theorem \ref{thm:metric_convex_compactness_property} we know that the function is integrable over every compact subset $K \subseteq (0, \infty)$. 
  Now we adapt the argument of the proof of Lemma \ref{lemma:integrability} by considering the sequences $(y_n)$ and $(z_n)$ defined by
  \[
    y_n := \int\limits_{[ \frac{1}{n}, r ]} ( T_s x-x ) \,\mu (\d s), \qquad
    z_n := \int\limits_{[ r , n ]} (T_s x-x) \,\mu (\d s),
  \]
  where $r > 0$ is a point of continuity of the measure $\mu$, i.e.\ $\mu(\{ r \}) = 0$. 
  As before one shows that both $(y_n)$ and $(z_n)$ are Cauchy sequences in $X$. Let their limits be denoted by $y$ and $z$, respectively. 
  Since
  \[y_n+z_n = \int\limits_{[\frac{1}{n},n]} (T_s x-x)\,\mu(\d s) \quad(n\in\N),\]
  we thus obtain
  \[
  z+y = \int\limits_{( 0, \infty )} (T_s x-x) \,\mu (\d s),
  \]
  which finishes the proof.
\end{proof}

\begin{lemma}
\label{lemma:invarianceofDA}
  Let $X$ be a sequentially complete Hausdorff locally convex space, $(T_t)$ a locally sequentially equicontinuous, equibounded $C_0$-semigroup on $(X,\topo)$ with generator $-A$,
  $f$ a Bernstein function, and $(S_t)$ the subordinated semigroup to $(T_t)$ w.r.t.\ $f$. 
  Then both $(S_t)$ and $\bigl(\lambda(\lambda + A^f)^{-1}\bigr)_{\lambda > 0}$ leave $\mathcal{D}(A)$ invariant and the operators of both families commute with $A$ (on $\mathcal{D}(A)$) and with the operators of $(T_t)$. 
\end{lemma}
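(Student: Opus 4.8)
The plan is to derive everything from the two integral representations at hand. For $S_t$ I use its definition $S_t x = \int_{[0,\infty)} T_s x\,\mu_t(\d s)$, and for the resolvent I use that, by Proposition \ref{prop:subordinatedsg}, $(S_t)$ is again a (locally) sequentially equicontinuous, equibounded $C_0$-semigroup, so that Lemma \ref{lem:properties_generator}(c) applied to $(S_t)$ gives $(\lambda - A^f)^{-1} x = \int_{(0,\infty)} \exp^{-\lambda t} S_t x\,\d t$ for $\lambda>0$. The entire statement then reduces to interchanging the operators $A$ and $T_r$ with these Pettis integrals. The tools I would lean on are: sequential continuity of each $T_r$ (immediate from local sequential equicontinuity), sequential closedness of $A$ (Lemma \ref{lem:properties_generator}(d)), and the explicit Riemann-sum representation of the Pettis integral over a compact interval recalled after Theorem \ref{thm:metric_convex_compactness_property}, combined with the sequential-completeness limiting argument of Lemma \ref{lemma:integrability} to pass from $[0,n]$ to $[0,\infty)$.

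First I would treat $(S_t)$. For commutation with $T_r$, fix $x\in X$ and write $S_t x = \lim_n \int_{[0,n]} T_s x\,\mu_t(\d s)$. Over each compact interval the integral is a limit of Riemann sums $\sum_i T_{\xi_i} x\,\mu_t((x_{i-1},x_i])$; applying the sequentially continuous operator $T_r$ passes it through these sums, and using $T_r T_{\xi_i} = T_{r+\xi_i}$ together with the continuity of $s\mapsto T_{r+s}x$ identifies $T_r \int_{[0,n]} T_s x\,\mu_t(\d s) = \int_{[0,n]} T_{r+s} x\,\mu_t(\d s)$. Letting $n\to\infty$ and using sequential continuity of $T_r$ once more yields $T_r S_t x = \int_{[0,\infty)} T_{r+s} x\,\mu_t(\d s) = S_t T_r x$. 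For invariance of $\mathcal{D}(A)$ and commutation with $A$, take $x\in\mathcal{D}(A)$, so that $s\mapsto T_s x$ and $s\mapsto A T_s x = T_s A x$ are continuous and bounded by Lemma \ref{lem:properties_generator}(b). Applying the linear operator $A$ to the Riemann sums for $\int_{[0,n]} T_s x\,\mu_t(\d s)$ produces the Riemann sums for $\int_{[0,n]} T_s A x\,\mu_t(\d s)$; since both converge and $A$ is sequentially closed, $\int_{[0,n]} T_s x\,\mu_t(\d s)\in\mathcal{D}(A)$ with $A$ acting as $\int_{[0,n]} T_s A x\,\mu_t(\d s)$. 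Letting $n\to\infty$, both sides converge to $S_t x$ and $S_t A x$, and a final appeal to sequential closedness gives $S_t x\in\mathcal{D}(A)$ and $A S_t x = S_t A x$.

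The resolvent family is handled by the identical mechanism applied to $(\lambda - A^f)^{-1} x = \int_{(0,\infty)} \exp^{-\lambda t} S_t x\,\d t$, now integrating against the finite measure $\exp^{-\lambda t}\,\d t$ and using continuity and boundedness of $t\mapsto S_t x$ and $t\mapsto S_t A x$ (guaranteed since $(S_t)$ is an equibounded $C_0$-semigroup). Pulling the sequentially continuous $T_r$ through the integral and invoking the already-proven $T_r S_t = S_t T_r$ gives $T_r (\lambda - A^f)^{-1} = (\lambda - A^f)^{-1} T_r$; pulling the sequentially closed $A$ through the integral and invoking $A S_t x = S_t A x$ gives $(\lambda - A^f)^{-1} x\in\mathcal{D}(A)$ and $A(\lambda - A^f)^{-1} x = (\lambda - A^f)^{-1} A x$ for $x\in\mathcal{D}(A)$. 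Multiplying by the scalar $\lambda$ affects none of these statements, so they persist for $\lambda(\lambda - A^f)^{-1}$.

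The main obstacle is precisely this interchange of $A$ and $T_r$ with the Pettis integral. In a general locally convex space these operators are only sequentially continuous, respectively sequentially closed, so one cannot simply dualise and move them under the integral sign as in the Banach-space argument. The device that makes everything go through is the Riemann-sum representation over compact intervals (available because a sequentially complete space has the metric convex compactness property), which recasts the interchange as a statement about convergent sequences where sequential continuity and sequential closedness suffice, followed by the sequential-completeness limiting step of Lemma \ref{lemma:integrability} to extend from $[0,n]$ to $[0,\infty)$.
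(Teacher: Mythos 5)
Your proposal is correct and follows essentially the same route as the paper: approximate the Pettis integrals by Riemann sums over compact intervals, pull the sequentially continuous $T_r$ and the sequentially closed $A$ through, and pass to $[0,\infty)$ via the limiting argument of Lemma \ref{lemma:integrability}, then repeat for the resolvent using Lemma \ref{lem:properties_generator}(c) applied to $(S_t)$. The only (immaterial) difference is that the paper deduces $AS_t=S_tA$ on $\mathcal{D}(A)$ from $T_sS_t=S_tT_s$ together with the sequential continuity of $S_t$, whereas you obtain it directly from the closedness-plus-Riemann-sum argument that also gives the invariance.
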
	

\begin{proof}
  The argument is the same as for the usual Banach space case. 
  Let $(\mu_t)$ be the family of measures associated to $f$ according to Proposition \ref{prop:Laplace-Transform}.
  Let $x \in \mathcal{D}(A)$. For $t\geq 0$ we have
  \[
    S_t x = \int\limits_{[0, \infty)} T_s x \, \mu_t(\d s) = \lim_{n\to\infty} \int\limits_{[0,n]} T_s x\,\mu_t(\d s)
  \]	
  by Lemma \ref{lemma:integrability}. For $n\in\N$ we can approximate $\int_{[0,n]} T_s x\,\mu_t(\d s)$ by (finite) Riemann sums which belong to $\mathcal{D}(A)$.
  Since $A$ is sequentially closed, also $\int_{[0,n]} T_s x\,\mu_t(\d s) \in \mathcal{D}(A)$ and therefore $S_tx\in \mathcal{D}(A)$.
  
  Let $s,t\geq 0$. We now show that $S_t$ commutes with $T_s$. Then $S_t$ also commutes with $A$ on $\mathcal{D}(A)$ since $S_t$ is sequentially continuous by Proposition \ref{prop:subordinatedsg}.
  Let $x\in X$. By Lemma \ref{lemma:integrability} and sequential continuity of $T_s$ we obtain
  \[T_s S_t x = T_s \int\limits_{[0,\infty)} T_rx \,\mu_t(\d r) = T_s \lim_{n\to\infty} \int\limits_{[0,n]} T_r x\,\mu_t(\d r) = \lim_{n\to \infty} T_s\int\limits_{[0,n]} T_r x\,\mu_t(\d r).\]
  Approximating $\int_{[0,n]} T_r x\,\mu_t(\d r)$ by (finite) Riemann sums, using the sequential continuity of $T_s$,  and taking into account the semigroup law for $(T_s)$, we conclude
  \[T_sS_t x = \lim_{n\to \infty} T_s\int\limits_{[0,n]} T_r x\,\mu_t(\d r) = \lim_{n\to \infty} \int\limits_{[0,n]} T_r T_s x\,\mu_t(\d r) = S_t T_s x\]
  again by Lemma \ref{lemma:integrability}.
  
  Let $\lambda>0$. By Lemma \ref{lem:properties_generator} we have
  \[
    \lambda (\lambda + A^f)^{-1} x = \lambda \int\limits_{(0, \infty)} \exp^{- \lambda t} S_t x \, \d t \quad(x\in X).
  \]
  Thus, the claims for $\lambda (\lambda + A^f)^{-1}$ follow from the claims for $(S_t)$ by approximating the integral by integrals over compact subsets and then by finite Riemann sums and taking into account the
  sequential closedness of $A$ and of the operators $T_t$ (which for them follows from sequential continuity).
\end{proof}

\begin{definition}
\label{def:f(A)}
  Let $X$ be a sequentially complete Hausdorff locally convex space, $(T_t)$ a (locally) sequentially equicontinuous, equibounded $C_0$-semigroup on $(X,\topo)$ with generator $-A$, and
  $f$ a Bernstein function with L\'{e}vy-Khintchine representation
  \[
    f(\lambda) = a + b \lambda + \int\limits_{(0,\infty)} \bigl(1-e^{-\lambda t}\bigr)\,\mu(\d t) \quad(\lambda>0). 
  \]
  We define the linear operator $A_f$ on $X$ by $\mathcal{D}(A_f) := \mathcal{D}(A)$ and
  \begin{align}
    A_f x & := ax + b Ax + \int\limits_{(0,\infty)} ( x - T_t x ) \mu(\d t) \quad(x\in \mathcal{D}(A_f))
    \label{eq:definition_A_f}
  \end{align}  
  where the integral exists by Proposition \ref{prop:pettis_integrability_of_Ttx-x}. 
\end{definition}
	
\begin{theorem}
\label{thm:representationonDA}
  Let $X$ be a sequentially complete Hausdorff locally convex space, $(T_t)$ a (locally) sequentially equicontinuous, equibounded $C_0$-semigroup on $(X,\topo)$ with generator $-A$, 
  $f$ a Bernstein function, and $-A^f$ the generator of the subordinated semigroup $(S_t)$ to $(T_t)$ w.r.t.\ $f$.
  Then $\mathcal{D}(A) \subseteq \mathcal{D}(A^f)$ and $A^f \big|_{\mathcal{D}(A)} = A_f$. 
\end{theorem}

\begin{remark}
  For Banach spaces $X$ this result is due to Phillips \cite{phillips1952}.
\end{remark}

\begin{proof}
  We adapt the proof of \cite[Theorem 13.6]{schilling2012} to our context. 
  Let $(a,b,\mu)$ be the L\'{e}vy triplet for $f$, and $(\mu_t)$ the associated family of measures.
  
  (i)
  Let us first assume that $f(0\rlim) = a = 0$, i.e.\ $(\mu_t)$ is actually a family of probability measures. 
  Then by Proposition \ref{prop:Laplace-Transform}
  \[
    f_n(\lambda) := \int\limits_{(0, \infty)} (1 - \exp^{-\lambda s}) \,n \mu_{\frac{1}{n}} (\d s) = \frac{1 - \exp^{-\frac{1}{n} f(\lambda)}}{\frac{1}{n}} \to f(\lambda) \text{ as } n \rightarrow \infty, 
  \]
  for all $\lambda>0$, i.e.\ $(f_n)$ is a sequence of Bernstein functions converging pointwise to $f$. 
  By \cite[Corollary 3.9]{schilling2012} we have
  \begin{align}
    \lim\limits_{n \rightarrow \infty} n \mu_{\frac{1}{n}} & = \mu \text{ vaguely on $(0,\infty)$}, \label{eq:vaguelimit}\\
    \lim\limits_{\substack{C \rightarrow \infty\\\mu(\{C\}) = 0}} \lim\limits_{n \rightarrow \infty} n \mu_{\frac{1}{n}} \bigl( [C, \infty) \bigr) & = 0,  \label{eq:upperlimit} \\
    \lim\limits_{\substack{c \rightarrow 0\rlim \\ \mu(\{c\}) = 0}} \lim\limits_{n \rightarrow \infty} \int\limits_{[0, c)} t \,n \mu_{\frac{1}{n}}(\d t) & = b. \label{eq:lowerlimit} 
  \end{align}
  Let $x\in\mathcal{D}(A)$ and $x'\in X'$.
  Let $c,C>0$ be such that $\mu(\{c,C\}) = 0$, i.e.\ $c$ and $C$ are points of continuity. 
  Then $n \mu_{\frac{1}{n}}|_{[c,C)} \to \mu|_{[c,C)}$ weakly since vague convergence implies
  \begin{align*}
   \mu\bigl([c,C)\bigr) & = \mu \bigl( (c,C) \bigr) \leq \liminf\limits_{n \to \infty} n \mu_{\frac{1}{n}} \bigl( [c,C) \bigr) \\
   & \leq \limsup\limits_{n \to \infty} n \mu_{\frac{1}{n}} \bigl( [c,C) \bigr) \leq \mu \bigl( [c,C] \bigr) = \mu\bigl([c,C)\bigr). 
  \end{align*}
  Now weak convergence follows from \cite[Theorem A.4]{schilling2012}.
  Hence, since the function $[[c,C)\ni t\mapsto \langle x', x - T_t x \rangle]$ is bounded and continuous, one has
  \[
    \lim\limits_{n \rightarrow \infty} \int\limits_{[c,C)} \langle x', x - T_t x \rangle \,n \mu_{\frac{1}{n}} (\d t ) = \int\limits_{[c,C)} \langle x' , x - T_t x \rangle \,\mu (\d t ).
  \]
  By dominated convergence, we obtain
  \[
    \lim\limits_{\substack{c \rightarrow 0\rlim \\ C \rightarrow \infty\\\mu(\{c,C\}) = 0}} \lim\limits_{n \rightarrow \infty} \int\limits_{[c,C)} \langle x' , x - T_t x \rangle \,n \mu_{\frac{1}{n}} (\d t ) = \int\limits_{(0,\infty)} \langle x' , x - T_t x \rangle \,\mu (\d t ).
  \]
  By Lemma \ref{lem:properties_generator}, for $c>0$ we have
  \begin{align*}
    \int\limits_{[0, c)} \langle x', x - T_t x \rangle \,n \mu_{\frac{1}{n}} ( \d t) = & \int\limits_{[0, c)} \int\limits_{(0,t)} \langle x' , T_s Ax - Ax \rangle \,\d s \, n \mu_{\frac{1}{n}} ( \d t)\\
  &  + \int\limits_{[0, c)} t \langle x' , Ax \rangle \,n \mu_{\frac{1}{n}} ( \d t). 
  \end{align*}
  Note that $[0,c)\ni t\mapsto \int_{(0,t)} \langle x' , T_s Ax - Ax \rangle \,\d s$ is continuous and bounded, and takes the value $0$ at $t=0$.
  Moreover, the sequence $\Bigl(\int_{(0,\cdot)} \langle x' , T_s Ax - Ax \rangle \,\d s \, n \mu_{\frac{1}{n}}\Bigr)_n$, interpreted as measures on $[0,c)$, is bounded and converges vaguely to the (finite) measure $\int_{(0,\cdot)} \langle x' , T_s Ax - Ax \rangle \,\d s \, \mu$ on $(0,c)$, which does not charge $\{0\}$.
  If $\mu(\{c\}) = 0$, we thus obtain that the convergence is even weakly. Hence, for such $c$ we have 
  \[
    \lim_{n \rightarrow \infty} \int\limits_{[0, c)} \int\limits_{(0,t)} \langle x' , T_s Ax - Ax \rangle \,\d s \, n \mu_{\frac{1}{n}} ( \d t) = \int\limits_{[0, c)} \int\limits_{(0,t)} \langle x' , T_s Ax - Ax \rangle \,\d s  \mu ( \d t),
  \]
  and therefore
  \[
    \lim\limits_{\substack{c \rightarrow 0\rlim \\ \mu(\{c\}) = 0}} \lim_{n \rightarrow \infty}\int\limits_{[0, c)} \int\limits_{(0,t)} \langle x' , T_s Ax - Ax \rangle \,\d s \, n \mu_{\frac{1}{n}} ( \d t)  = 0.
  \]
  Moreover,
  \[
    \lim\limits_{\substack{c \rightarrow 0\rlim \\ \mu(\{c\}) = 0}} \lim_{n \rightarrow \infty} \int\limits_{[0, c)} t \langle x' , Ax \rangle \,n \mu_{\frac{1}{n}} ( \d t) = b \, \langle x' , Ax \rangle
  \]
  by \eqref{eq:lowerlimit}.
  Hence,
  \[
     \lim\limits_{\substack{c \rightarrow 0\rlim \\ \mu(\{c\}) = 0}} \lim_{n \rightarrow \infty} \int\limits_{[0, c)} \langle x', x - T_t x \rangle \,n \mu_{\frac{1}{n}} ( \d t) = b \, \langle x' , Ax \rangle.
  \]
  Furthermore, since $(T_t)$ is equibounded, by \eqref{eq:upperlimit} we obtain
  \[
    \lim\limits_{\substack{C \rightarrow \infty\\\mu(\{C\}) = 0}} \lim_{n \rightarrow \infty} \int\limits_{[C, \infty)} \langle x' , x - T_s x \rangle n \mu_{\frac{1}{n}} ( \d s) = 0.
  \]
  Thus,
  \begin{align*}
    \langle x' , A_f x \rangle  
    & = \; \Bigl\langle x' , b Ax + \int\limits_{(0, \infty)} (x - T_t x ) \,\mu (\d t) \Bigr\rangle \\ 
    & = \lim\limits_{\substack{c \rightarrow 0\rlim \\ \mu(\{c\}) = 0}} \lim_{n \rightarrow \infty} \int\limits_{[0, c)} \langle x' , x - T_t x \rangle \,n \mu_{\frac{1}{n}} (\d t)\\
    & + \lim\limits_{\substack{c \rightarrow 0\rlim \\ C \rightarrow \infty\\\mu(\{c,C\}) = 0}} \lim\limits_{n \rightarrow \infty} \int\limits_{[c, C)} \langle x', x - T_t x \rangle \,n \mu_{\frac{1}{n}} (\d t) \\
    & + \lim\limits_{\substack{C \rightarrow \infty\\\mu(\{C\}) = 0}} \lim_{n \rightarrow \infty} \int\limits_{[C, \infty)} \langle x' ,  x - T_t x \rangle \,n \mu_{\frac{1}{n}} (\d t)  \\ 
    & = \lim\limits_{\substack{c \rightarrow 0\rlim \\ C \rightarrow \infty\\\mu(\{c,C\}) = 0}} \lim_{n \rightarrow \infty} \int\limits_{[0, \infty)} \langle x' , x - T_t x \rangle \,n \mu_{\frac{1}{n}} (\d t)  
    = \lim_{n \rightarrow \infty} \langle x' , n (x - S_{\frac{1}{n}}x) \rangle.
  \end{align*}
  For $\lambda>0$ let $x_\lambda:=\lambda (\lambda + A^f)^{-1}x$.
  We now apply Lemma \ref{lemma:invarianceofDA} multiple times.
  Then, on the one hand, $x_\lambda\in \mathcal{D}(A^f)\cap \mathcal{D}(A)$, and moreover (approximating the integral by integrals over compacta and then by finite Riemann sums again)
  \begin{align*}
    \langle x' , \lambda(\lambda+A^f)^{-1} A_f x \rangle  
    & = \Bigl\langle x' , b Ax_\lambda + \int\limits_{(0, \infty)} (x_\lambda - T_t x_\lambda ) \,\mu (\d t) \Bigr\rangle \\
    & = \lim_{n \rightarrow \infty} \langle x' , n (x_\lambda - S_{\frac{1}{n}}x_\lambda) \rangle 
    = \langle x' , A^f x_\lambda\rangle.
  \end{align*}
  Since this holds true for all $x'\in X'$, we obtain
  \[\lambda(\lambda+A^f)^{-1} A_f x = A^f x_\lambda.\]  
  By Lemma \ref{lem:properties_generator} we have $\lambda (\lambda + A^f)^{-1}\to I$ strongly as $\lambda\to \infty$.
  Since $A^f$ is sequentially closed by Lemma \ref{lem:properties_generator}, we thus obtain $x\in \mathcal{D}(A^f)$ and
  $A^fx = A_f x$.
  
  (ii)
  For the general case $f(0\rlim) = a \geq 0$ consider $h := f - a$. 
  Then $h$ is a Bernstein function with $h(0\rlim) = 0$. Let $(\nu_t)$ be the associated family of sub-probability measures. 
  Then $(\mu_t)$ given by $\mu_t = \exp^{-ta} \nu_t$ for $t \geq 0$ is the family of measures associated to $f$. 
  Thus, for $t\geq 0$ and $x\in X$ we have
  \[
    S^f_t x = \int\limits_{[0, \infty)} T_s x \, \mu_t (\d s) = \int\limits_{[0, \infty)} T_s x \, \exp^{-ta} \nu_t (\d s) = \exp^{-ta} S_t^h x,
  \]
  i.e.\ $(S_t^f)$ is a rescaling of $(S_t^h)$.
  Analogously to the case of $C_0$-semigroups on Banach spaces one proves that then $-A^f = -A^h - a$. Thus the general case follows from (i).
\end{proof}

\begin{remark}
  In the above proof, we first showed that $x\in \mathcal{D}(A)$ belongs to the weak generator of $(S_t)$, and then did a regularisation by resolvents to obtain the result.
  If $(S_t)$ is continuous (and not just sequentially continuous), we can directly conclude that $(x,Ax)$ belongs to the weak closure of $A^f$ which coincides with $A^f$ since $A^f$ is closed.
\end{remark}

\begin{corollary}
\label{coro:generator_subordinated_semigroup}
  Let $X$ be a sequentially complete Hausdorff locally convex space, $(T_t)$ a locally sequentially equicontinuous, equibounded $C_0$-semigroup on $(X,\topo)$ with generator $-A$, 
  $f$ a Bernstein function, and $-A^f$ the generator of the subordinated semigroup $(S_t)$ to $(T_t)$ w.r.t.\ $f$.
  Then $A_f$ is (sequentially) closable in $X$ and the sequential closure of $A_f$ equals $A^f$. 
\end{corollary}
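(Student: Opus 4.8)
The plan is to read off one inclusion immediately from Theorem~\ref{thm:representationonDA} and to establish the other by showing that $\mathcal{D}(A)$ is a sequential core for $A^f$. By Theorem~\ref{thm:representationonDA} we have $\mathcal{D}(f(A)) = \mathcal{D}(A) \subseteq \mathcal{D}(A^f)$ and $A^f|_{\mathcal{D}(A)} = f(A)$, so $f(A) \subseteq A^f$ as operators. Since $-A^f$ generates the locally sequentially equicontinuous, equibounded $C_0$-semigroup $(S_t)$ by Proposition~\ref{prop:subordinatedsg}, Lemma~\ref{lem:properties_generator} shows that $A^f$ is sequentially closed. Hence $f(A)$ possesses a sequentially closed extension, so it is sequentially closable, and its sequential closure satisfies $\overline{f(A)} \subseteq A^f$: the sequential closure of the graph of $f(A)$ is contained in the sequentially closed graph of $A^f$, whence in particular single-valuedness (i.e.\ closability) is automatic. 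The remaining task is the reverse inclusion $A^f \subseteq \overline{f(A)}$.

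For this I would regularise with the resolvent of $A$ rather than that of $A^f$, the point being that the former already maps into $\mathcal{D}(A) = \mathcal{D}(f(A))$. Fix $x \in \mathcal{D}(A^f)$ and set
\[x_n := n(n+A)^{-1}x = n\int\limits_{(0,\infty)} \exp^{-ns} T_s x\,\d s \quad(n\in\N),\]
which lies in $\mathcal{D}(A)$ and satisfies $x_n \to x$ as $n\to\infty$ by Lemma~\ref{lem:properties_generator}. It then remains to check $f(A)x_n \to A^f x$. As $x_n \in \mathcal{D}(A)$, Theorem~\ref{thm:representationonDA} gives $f(A)x_n = A^f x_n$, so I only need $A^f x_n \to A^f x$, which I would obtain from a commutation relation: the sequentially continuous operator $n(n+A)^{-1}$, being an integral of the $T_s$ against $\exp^{-ns}$, commutes with every $S_t$ by Lemma~\ref{lemma:invarianceofDA} (pull $S_t$ through the Pettis integral via Riemann sums, exactly as in the proof of that lemma). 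A sequentially continuous operator commuting with all $S_t$ maps $\mathcal{D}(A^f)$ into itself and commutes with $A^f$ there; applying this gives
\[A^f x_n = n(n+A)^{-1} A^f x \to A^f x \quad(n\to\infty),\]
again by Lemma~\ref{lem:properties_generator}. Thus $(x_n, f(A)x_n) \to (x, A^f x)$ with $x_n \in \mathcal{D}(f(A))$, so $(x, A^f x)$ lies in the sequential closure of the graph of $f(A)$; since $x \in \mathcal{D}(A^f)$ was arbitrary, $A^f \subseteq \overline{f(A)}$, and together with the first inclusion this yields $\overline{f(A)} = A^f$.

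The main obstacle is the commutation step, in particular justifying that $n(n+A)^{-1}$ commutes with $A^f$. I would first establish $S_t\, n(n+A)^{-1} = n(n+A)^{-1}\, S_t$ from $S_t T_s = T_s S_t$ by approximating the integral over compact subsets and then by finite Riemann sums, invoking sequential continuity of $S_t$, precisely as in Lemma~\ref{lemma:invarianceofDA}. The implication that a sequentially continuous operator commuting with every $S_t$ also commutes with $A^f$ requires care, because the generator is defined through the continuous limit $t\to 0\rlim$ while only sequential continuity is at hand; this is handled by observing that convergence of $\tfrac{1}{t}(x - S_t x) \to A^f x$ along every sequence $t\to 0\rlim$ is equivalent to the continuous limit, so sequential continuity of $n(n+A)^{-1}$ suffices to transport the limit. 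Finally, a point worth stating explicitly in order to avoid appealing to any idempotency of the sequential closure operation is that both inclusions are proved \emph{directly} against the sequentially closed operator $A^f$, so no iteration of the closure is ever needed.
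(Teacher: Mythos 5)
Your proof is correct, but it takes a genuinely different route from the paper's. The paper disposes of the corollary in three lines by appealing to the abstract core theorem: $\mathcal{D}(A)$ is sequentially dense (Lemma \ref{lem:properties_generator}), contained in $\mathcal{D}(A^f)$ (Theorem \ref{thm:representationonDA}) and invariant under $(S_t)$ (Lemma \ref{lemma:invarianceofDA}), hence a ``sequential core'' for $A^f$, the last step being justified only by analogy with the Banach space case and a citation. You instead prove the core property directly by resolvent regularisation: for $x\in\mathcal{D}(A^f)$ you take $x_n = n(n+A)^{-1}x\in\mathcal{D}(A)$ and show $f(A)x_n = A^f x_n = n(n+A)^{-1}A^f x\to A^f x$, the commutation resting on $S_tT_s=T_sS_t$ from Lemma \ref{lemma:invarianceofDA} pushed through the Pettis integral by Riemann sums, together with the observation that a sequentially continuous operator commuting with every $S_t$ commutes with $A^f$ --- and your remark that the limit $t\to 0\rlim$ may be tested along sequences is exactly what makes sequential continuity sufficient there. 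Your choice of the resolvent of $A$ rather than of $A^f$ is the right one, since $\lambda(\lambda+A^f)^{-1}$ does not map a general element of $\mathcal{D}(A^f)$ into $\mathcal{D}(A)$ (Lemma \ref{lemma:invarianceofDA} only gives invariance of $\mathcal{D}(A)$, not absorption into it). What your approach buys is self-containedness: the paper's citation of the core theorem tacitly assumes that its standard proof (Ces\`aro means of orbits, approximated by Riemann sums) transfers to the sequential locally convex setting, whereas you bypass the core theorem entirely at the cost of one commutation argument of the same nature as computations already carried out in Lemma \ref{lemma:invarianceofDA}. Your closing point that both inclusions are checked directly against the sequentially closed operator $A^f$, so that no idempotency of the sequential closure operation is needed, is a genuine subtlety that the paper glosses over.
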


\begin{proof}
  By Lemma \ref{lem:properties_generator} $\mathcal{D}(A)$ is sequentially dense in $X$, by Theorem \ref{thm:representationonDA} we have $\mathcal{D}(A)\subseteq \mathcal{D}(A^f)$ and
  by Lemma \ref{lemma:invarianceofDA} it is invariant under $(S_t)$.
  Thus, as in the case of $C_0$-semigroups on Banach spaces we conclude that $\mathcal{D}(A)$ is a `sequential core' for $A^f$, i.e.\ the sequential closure of $A^f|_{\mathcal{D}(A)}$ equals $A^f$ (see e.g.\ \cite[Proposition 1.14]{isem18} for the case of $C_0$-semigroups on Banach spaces).
  Since $A^f|_{\mathcal{D}(A)} = A_f$ by Theorem \ref{thm:representationonDA}, we obtain the assertion.
\end{proof}

Analogously to the scalar-valued case we shall write from now on
\[
 f(A) := A^f.  
\]
Note that similar to the situation in Banach spaces one could have developed an entire
functional calculus in the sense of \cite{haase2006} which enables one to define $f(A)$ with the
same outcome.

\section{Applications}
\label{chap:bi-continuous}

We now consider two applications, namely bi-continuous semigroups and transition semigroups of stochastic processes.

\subsection{Bi-continuous semigroups}

In this subsection let $X$ be a Banach space with norm-topology $\topo_{\norm{\cdot}}$.
 
\begin{definition}
  \label{defi:bicontinuoussg}
  Let $(T_t)$ in $L(X)$ be a semigroup on $X$ and $\topo$ a Hausdorff locally convex topology on $X$. We say that $(T_t)$ is \emph{(locally) bi-continuous (w.r.t.\ $\topo$)} if
  \begin{enumerate}
   \item
    $\topo\subseteq \topo_{\norm{\cdot}}$, $(X,\topo)$ is sequentially complete on $\norm{\cdot}$-bounded sets and $(X,\topo)' \subseteq (X,\topo_{\norm{\cdot}})'$ is norming for $X$,
   \item
     there exists $M\geq 1$, $\omega\in\R$ such that $\norm{T_t}\leq Me^{\omega t}$ for all $t\geq 0$,
    \item
      $(T_t)$ is a $C_0$-semigroup on $(X, \topo)$, 
    \item
     for every sequence $(x_n)$ in $X$, $x\in X$ with $\sup\limits_{n\in\N}\norm{x_n}<\infty$ and $\topo\text{-}\lim\limits_{n\to\infty} x_n = x$ we have
     \[\topo\text{-}\lim_{n\to\infty} T_t (x_n-x) = 0\]
     (locally) uniformly for $t\in [0,\infty)$.
  \end{enumerate}
  We say that a bi-continuous semigroup is \emph{uniformly bounded} if $\sup_{t\geq 0} \norm{T_t} <\infty$.
\end{definition}

\begin{remark}
      The notion of bi-continuous semigroups first appeared in \cite{Kuehnemund2001}, see also \cite[Definition 3]{kuehnemund2003}. 
\end{remark}

First, we study bi-continuous semigroups. In order to do this, we need some preparation.

\begin{remark} \label{remark:mixed_topo}
 \begin{enumerate}
    \item Given any Hausdorff locally convex topology $\topo$ coarser than $\topo_{\norm{\cdot}}$ one can construct a Hausdorff locally convex topology $\gamma := \gamma(\topo, \topo_{\norm{\cdot}})$, called \emph{mixed topology} 
      \cite[Section 2.1]{wiweger1961}, such that $\topo \subseteq \gamma \subseteq \topo_{\norm{\cdot}}$ and 
      $\gamma$ is the finest linear topology that coincides with $\topo$ on $\|\cdot\|$-bounded sets by 
      \cite[Lemmas 2.2.1, 2.2.2]{wiweger1961}.
    \item The triple $(X,\|\cdot\|,\topo)$ is called \emph{Saks space} by 
    \cite[Section I.3, 3.1 Lemma (c), 3.2 Definition]{cooper1978} 
    if $\topo$ is a Hausdorff locally convex topology on $X$ such that $\topo\subseteq \topo_{\norm{\cdot}}$ and 
    \begin{equation}\label{eq:saks}
    \norm{x}=\sup_{p\in\mathcal{P}} \norm{x}_p \quad (x\in X).
    \end{equation}
    Equation \eqref{eq:saks} is equivalent to the property that $(X,\topo)' \subseteq (X,\topo_{\norm{\cdot}})'$ 
    is norming for $X$ (cf.\ \cite[Remark 5.2]{budde2019} and \cite[Lemma 4.4]{kraiij2016}). 
    \item It is covenient to characterise the mixed topology $\gamma := \gamma(\topo, \topo_{\norm{\cdot}})$ by its 
    continuous seminorms. In the case that \eqref{eq:saks} holds a useful representation of these seminorms 
    may be given in the following way. For a sequence $(p_{n})$ in $\mathcal{P}$ and a 
    sequence $(a_{n})$ in $(0,\infty)$ with $\lim_{n\to\infty}a_{n}=\infty$ we define the seminorm
    \[
    \vertiii{x}_{(p_n),(a_n)}:=\sup_{n\in\N}\norm{x}_{p_n}a_{n}^{-1}\quad (x\in X).
    \]
    If either 
     \begin{enumerate}
     \item for every $x\in X$, $\varepsilon>0$ and $p\in\mathcal{P}$ there are $y,z\in X$ such that $x=y+z$, 
     $\norm{z}_p=0$ and $\norm{y}\leq \norm{x}_p+\varepsilon$, or 
     \item the $\norm{\cdot}$-unit ball $B_{\|\cdot\|}:=\{x\in X \mid \|x\|\leq 1\}$ is $\topo$-compact,
     \end{enumerate}
    then $\gamma$ is generated by $(\vertiii{\cdot}_{(p_n),(a_n)})$ due to \cite[Section I.4, 4.5 Proposition]{cooper1978}. 
 \end{enumerate}
\end{remark}

\begin{example}
\label{ex:strict}
	Let $X:=C_{\rm b}(\R^n)$ with supremum norm $\norm{\cdot}_\infty$, $\topo_{\rm co}$ the compact-open topology, 
	i.e.\ the topology of uniform convergence on compact subsets of $\R^n$, and $\gamma$ the mixed topology determined 
	by $\topo_{\norm{\cdot}_\infty}$ and $\topo_{\rm co}$. $(X,\norm{\cdot}_\infty,\topo_{\rm co})$ is a Saks space 
	which fulfils condition (i) of Remark \ref{remark:mixed_topo} (c) and $\gamma$ is also generated by the 
	weighted seminorms $\norm{f}_g := \sup_{x \in \R^n} \abs{g(x) f(x)}$ for $f \in C_{\rm b} (\R^n)$ with 
	weights $g \in C_0 (\R^n)$ by \cite[Section II.1, 1.11 Proposition]{cooper1978} 
	(cf.\ \cite[Theorem 2.3, 2.4]{sentilles1972}).
\end{example}

The topology generated by the seminorms $\norm{\cdot}_g$, $g \in C_0 (\R^n)$, on $C_{\rm b}(\R^n)$ was introduced under the name \emph{strict topology}, denoted by $\beta$, in 
\cite[Definition, p.\ 97]{buck1958} and the example shows that the strict topology is indeed a mixed topology (cf.\ \cite[Proposition 3]{cooper1971} and also Remark \ref{rem:strict_topology} (b), (c) below).

\begin{lemma}
\label{lem:mixed_topology}
  Let $X$ be a Banach space with norm-topology $\topo_{\norm{\cdot}}$, $\topo\subseteq \topo_{\norm{\cdot}}$ a Hausdorff locally convex topology on $X$ such that $(X, \topo)'$ is norming for $X$, and $\gamma := \gamma(\topo, \topo_{\norm{\cdot}})$ the mixed topology. Then the following holds. 
  \begin{enumerate}
    \item      
      $(X, \gamma)'$ is norming for $X$.
    \item
      $(X,\topo)$ is sequentially complete on $\norm{\cdot}$-bounded sets if and only if $(X,\gamma)$ is sequentially complete.
  \end{enumerate}
\end{lemma}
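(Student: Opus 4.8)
The plan is to reduce both assertions to three standard structural properties of the mixed topology $\gamma=\gamma(\topo,\topo_{\norm{\cdot}})$ due to Wiweger \cite{wiweger1961}: (A) the sandwiching $\topo\subseteq\gamma\subseteq\topo_{\norm{\cdot}}$ already recorded in Remark \ref{remark:mixed_topo}; (B) $\gamma$ and $\topo$ induce the \emph{same} subspace topology on every $\norm{\cdot}$-bounded subset of $X$; and (C) the $\gamma$-bounded sets coincide with the $\norm{\cdot}$-bounded sets, so in particular every $\gamma$-Cauchy sequence is $\norm{\cdot}$-bounded. I will also use that, since $(X,\topo)'$ is norming for $X$, every closed norm ball $B_r:=\{x\in X : \norm{x}\le r\}$ is $\topo$-closed, being an intersection of the $\topo$-closed sets $\{x : \abs{\langle x',x\rangle}\le r\}$ over $x'\in (X,\topo)'$ with $\norm{x'}\le 1$.

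Part (a) is immediate from (A). As $\topo\subseteq\gamma$, every $\topo$-continuous linear functional is $\gamma$-continuous, whence $(X,\topo)'\subseteq(X,\gamma)'$. Since a superset of a norming subspace is again norming, and $(X,\topo)'$ is norming by hypothesis, $(X,\gamma)'$ is norming for $X$.

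For the forward implication in part (b), I assume $(X,\topo)$ is sequentially complete on $\norm{\cdot}$-bounded sets and take a $\gamma$-Cauchy sequence $(x_n)$. A Cauchy sequence is bounded, so by (C) it is $\norm{\cdot}$-bounded, say $(x_n)\subseteq B_r$; by (A) it is also $\topo$-Cauchy. Sequential completeness on $B_r$ then yields a $\topo$-limit $x$, and $x\in B_r$ because $B_r$ is $\topo$-closed. Since all $x_n$ and $x$ lie in $B_r$, where by (B) the topologies $\gamma$ and $\topo$ agree, we conclude $x_n\to x$ in $\gamma$ as well, so $(X,\gamma)$ is sequentially complete. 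For the converse I assume $(X,\gamma)$ is sequentially complete and take a $\topo$-Cauchy sequence $(x_n)$ contained in some $B_r$. The key step is to promote this to $\gamma$-Cauchiness: given a $\gamma$-neighbourhood $V$ of $0$, property (B) applied to the bounded set $B_{2r}$ provides a $\topo$-neighbourhood $U$ of $0$ with $U\cap B_{2r}\subseteq V$; since $x_n-x_m\in B_{2r}$ always and $x_n-x_m\in U$ eventually, we get $x_n-x_m\in V$ eventually, i.e.\ $(x_n)$ is $\gamma$-Cauchy. By hypothesis it $\gamma$-converges, and by (A) it then $\topo$-converges to the same limit in $X$; hence $(X,\topo)$ is sequentially complete on $\norm{\cdot}$-bounded sets.

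The main obstacle is the converse direction, specifically the passage from $\topo$-Cauchy-plus-bounded to $\gamma$-Cauchy: the mixed topology is in general strictly finer than $\topo$, so this promotion genuinely relies on the defining agreement (B) of $\gamma$ with $\topo$ on bounded sets, together with the observation that the differences $x_n-x_m$ stay inside a fixed ball $B_{2r}$. Everything else is bookkeeping with the sandwiching (A), the boundedness characterisation (C), and the $\topo$-closedness of norm balls that comes from $(X,\topo)'$ being norming.
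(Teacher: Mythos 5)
Your proof is correct and follows essentially the same route as the paper: part (a) is identical, and part (b) rests on Wiweger's characterisation of the $\gamma$-convergent and $\gamma$-Cauchy sequences as exactly the $\topo$-convergent/$\topo$-Cauchy, $\norm{\cdot}$-bounded ones --- the paper cites this directly as \cite[Theorem 2.3.1, Corollary 2.3.2]{wiweger1961}, whereas you rederive it from the agreement of $\gamma$ and $\topo$ on norm-bounded sets together with the boundedness characterisation. The only point worth flagging is that your properties (B) and (C) are themselves conditional results of \cite{wiweger1961}; the hypothesis that $(X,\topo)'$ is norming is what validates them in this setting (it yields Wiweger's condition (d)), exactly as the paper remarks at the start of its proof of (b).
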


\begin{proof} \leavevmode
  \begin{enumerate}
    \item      
      This is clear since $\topo\subseteq \gamma$, so $(X,\topo)'\subseteq (X,\gamma)'$.
    \item
     First, we remark that the norming property guarantees that condition (d) of \cite[Theorem 2.3.1]{wiweger1961}) 
and \cite[Corollary 2.3.2]{wiweger1961} is fulfilled.
      By \cite[Corollary 2.3.2]{wiweger1961} $(x_n)$ is a Cauchy sequence in $(X,\gamma)$ if and only if $(x_n)$ is a Cauchy sequence in $(X,\topo)$ and $(x_n)$ is $\norm{\cdot}$-bounded,
      and \cite[Theorem 2.3.1]{wiweger1961} yields that $(x_n)$ is convergent in $(X,\gamma)$ if and only if $(x_n)$ is convergent in $(X,\topo)$ and $(x_n)$ is $\norm{\cdot}$-bounded.
      Thus, the assertion follows.
  \end{enumerate}
\end{proof}

Let us recall the concept of a sequential space (see \cite[Proposition 1.1]{franklin1965}, \cite[p.\ 53]{engelking1989}). 
A subset $A$ of a topological space $(X,\topo)$ is called 
\emph{sequentially closed} if for every sequence $(x_{n})$ in $A$ converging to a point $x\in X$ the point $x$ is already in $A$. 
A subset $U$ of $(X,\topo)$ is called \emph{sequentially open} if every sequence $(x_{n})$ in $X$ converging to a point $x\in U$ is eventually in $U$.
A topological space $(X,\topo)$ is called a \emph{sequential space} if each sequentially closed subset of $X$ is closed.
Equivalently, $(X,\topo)$ is a sequential space if and only if each sequentially open subset of $X$ is open.
In particular, all first countable spaces are sequential spaces \cite[Theorem 1.6.14]{engelking1989} 
as well as all Montel (DF)-spaces \cite[Theorem 4.6]{kakolsaxon2002} like the space of tempered distributions or the space of distributions with compact support with the strong dual topology. 
A topological vector space $(X,\topo)$ is called \emph{convex-sequential} or \emph{C-sequential} 
if every convex sequentially open subset of $X$ is open (see \cite[p.\ 273]{snipes1973}). Obviously, every 
sequential topological vector space is C-sequential. Further, every bornological topological vector space is 
C-sequential by \cite[Theorem 8]{snipes1973}. The bornological spaces $\mathcal{D}(\R^{n})$ 
of test functions on $\R^{n}$ with its inductive limit topology and 
$\mathcal{D}(\R^{n})'$ of distributions with its strong dual topology are examples of $C$-sequential spaces 
that are not sequential by \cite[Th\'eor\`{e}me 5]{shirai1959} and \cite[Proposition 1]{dudley1971}.
For our next proofs we need a classification of C-sequential Hausdorff locally 
convex spaces. Let $(X,\topo)$ be a Hausdorff locally convex space and $\mathcal{U}^{+}$ 
be the collection of all absolutely convex subsets $U\subseteq X$ which satisfy the condition that every sequence 
$(x_{n})$ in $X$ converging to $0$ is eventually in $U$.
Then $\mathcal{U}^{+}$ is a zero neighbourhood basis for a Hausdorff locally convex topology 
$\topo^{+}\subseteq\topo$ on $X$, which is the finest Hausdorff locally convex topology on $X$ with the same 
convergent sequences as $\topo$ by \cite[Proposition 1.1]{webb1968}. 

\begin{proposition}[{\cite[Theorem 7.4]{wilansky1981}}]\label{prop:c_sequential}
Let $(X,\topo)$ be a Hausdorff locally convex space. Then the following assertions are equivalent.
\begin{enumerate}
\item $X$ is C-sequential.
\item $\topo^{+}=\topo$.
\item For any Hausdorff locally convex space $Y$ a linear map $f: X\to Y$ is continuous if and only if 
it is sequentially continuous. 
\end{enumerate}
\end{proposition}

\begin{proposition}
\label{prop:mixed_topology_sequential}
Let $X$ be a Banach space with norm-topology $\topo_{\norm{\cdot}}$ and $\topo\subseteq \topo_{\norm{\cdot}}$ 
a Hausdorff locally convex topology on $X$ such that $\topo$ is metrisable on the $\norm{\cdot}$-unit ball 
$B_{\|\cdot\|}$, and $\gamma:=\gamma(\topo, \topo_{\norm{\cdot}})$ the mixed topology.
Then $(X, \gamma)$ is a C-sequential space and $\gamma^{+}=\gamma$.
\end{proposition}
\begin{proof}
Let $Y$ be any Hausdorff locally convex space and $f: (X,\gamma)\to Y$ any linear sequentially continuous map. 
It follows from \cite[Section I.1, Proposition 1.9]{cooper1978} that $f$ is even continuous. 
We conclude that $(X, \gamma)$ is a $C$-sequential space and $\gamma^{+}=\gamma$ by Proposition \ref{prop:c_sequential}.
\end{proof}

In particular, Proposition \ref{prop:mixed_topology_sequential} is applicable if $(X,\topo)$ is metrisable, 
and implies the $\beta^{+}=\beta$ part in \cite[Theorem 8.1]{kraiij2016}. 
Further, Proposition \ref{prop:mixed_topology_sequential} gives a sufficient condition for $\gamma^{+}=\gamma$ that is simple to check and relevant for the relation between bi-continuous semigroups and \emph{SCLE semigroups} 
\cite[Section 7]{kraiij2016}.
If $(X, \gamma)$ is even metrisable or equivalently first countable \cite[Proposition 1.1.11 (ii)]{zalinescu2002}, 
then we are in the uninteresting situation that $\gamma=\topo_{\norm{\cdot}}$ 
by \cite[Section I.1, Proposition 1.15]{cooper1978}. However, $X=C_{\rm b}(\R^n)$ with the mixed ($=$strict) topology 
from Example \ref{ex:strict} is a C-sequential space by Proposition \ref{prop:mixed_topology_sequential} 
which is not metrisable (not even bornological or barrelled) since the strict topology 
does not coincide with the norm-topology (cf.\ \cite[Section II.1, Proposition 1.2 5)]{cooper1978}).  

\begin{proposition}
	\label{prop:uniqueness_mixed}
	Let $X$ be a Banach space with norm-topology $\topo_{\norm{\cdot}}$ and $\topo\subseteq \topo_{\norm{\cdot}}$ a Hausdorff locally convex topology
	such that $(X, \topo)'$ is norming and $(X, \topo)$ a sequentially complete C-sequential space. 
	Then $\topo = \gamma(\topo, \topo_{\norm{\cdot}})$, i.e.\ further mixing does not extend the topology.     
	\label{prop:mixing}  	
\end{proposition}

\begin{proof}
	Denote $\gamma:= \gamma(\topo, \topo_{\norm{\cdot}})$. 
	From \cite[Lemma 2.1.1 (3)]{wiweger1961} (condition (n) there is satisfied) one gets $\topo \subseteq \gamma$.  
	The other inclusion will be proved by contradiction. 
	Let us assume that there is $U\in\gamma$ such that $U\notin\topo$. Due to $(X,\topo)$ being C-sequential we 
	obtain $U\notin\topo^{+}$ by Proposition \ref{prop:c_sequential}.
	Since $U\notin\topo^{+}$, there is $x_{0}\in U$ such that $U$ is not a neighbourhood of $x_{0}$ 
	w.r.t.\ $\topo^{+}$, i.e.\ for all $V\in\mathcal{U}^{+}$ it holds that 
	$x_{0}+V\nsubseteq U$. W.l.o.g.\ $x_{0}=0$ because $\topo^{+}$ is a locally convex topology. 
	As $U\in\gamma$ and $x_{0}=0$, there is an absolutely convex zero neighbourhood $V_{0}\subseteq U$ w.r.t.\ $\gamma$. 
	Then there is a sequence $(x_{n})$ in $X$ such that $x_n \to 0$ w.r.t.\ $\topo$ but $(x_{n})$ 
	is not eventually in $V_{0}$ because otherwise $V_{0}\in\mathcal{U}^{+}$ with $x_{0}+V_{0}=V_{0}\subseteq U$.
	The sequence $(x_{n})$ cannot be $\norm{\cdot}$-bounded otherwise it would follow that $x_n \to 0$ w.r.t.\ $\gamma$ 
	by \cite[Theorem 2.3.1]{wiweger1961} (the norming property implies condition (d) of \cite[Theorem 2.3.1]{wiweger1961}) 
	and thus that $(x_{n})$ is eventually in $V_{0}$. Therefore, $(x_{n})$ is $\norm{\cdot}$-unbounded. 
	W.l.o.g.\ we assume 
	\[
		\forall n \in \N: \, \norm{x_n} > n
	\] 
	and that all $x_n$ are distinct from each other. 
	By the norming property for $n\in\N$ there exists a $\topo$-continuous functional $f_n$ in the unit sphere of the dual space such that $\abs{\langle f_n , x_n \rangle} > n$. 
	The sets $\{ \langle f_n, x \rangle \mid n \in \N\}$ are bounded by $\norm{x}$ for every $x \in X$.  
	Further, the set $K := \{ x_n \mid n \in \N\} \cup \{0\}$ is compact w.r.t.\ $\topo$ since the sequence $(x_n)$ is convergent to $0$. 
	The topology on $K$ is metrisable as image of the continuous map
	\[
		g: \bigl\{ \tfrac{1}{n} \mid n \in \N \bigr\} \cup \{0\} \ni x \mapsto g(x) := \begin{cases} x_n & \text{ if } x=\tfrac{1}{n}, \\ 0 & \text{ if } x=0, \end{cases}
	\] 
	where the domain is equipped with the standard metric (\cite[Chapter IX, § 2.10, Proposition 17]{bourbaki1966}). 
	Using \cite[Remark 4.1.b]{voigt1992} we know that $\overline{\operatorname{cx}} \, K$ is compact and convex. 
	By a variant of the uniform boundedness principle \cite[Theorem 2.9]{rudin1991} we obtain the boundedness of $\bigcup_{n\in\N} f_n(\overline{\operatorname{cx}} \, K)$. 
	In particular, $(\langle f_n,x_n\rangle)$ is bounded, contradicting $\abs{\langle f_n,x_n\rangle}\to \infty$.
\end{proof}

We remark that the condition of sequential completeness of $(X, \topo)$ in the preceding proposition can be weakened to the metric convex compactness property 
since we only need the compactness of $\overline{\operatorname{cx}} \, K$.

\begin{corollary}
	\label{coro:bounded}
	Let $X$ be a Banach space with norm-topology $\topo_{\norm{\cdot}}$, $\topo\subseteq \topo_{\norm{\cdot}}$ a Hausdorff locally convex topology
	such that $(X, \topo)'$ is norming and $(X, \topo)$ is a sequentially complete C-sequential space. 
	Then $B\subseteq X$ is bounded in $(X, \topo)$ if and only if $B$ is bounded in $(X, \topo_{\norm{\cdot}})$. 
\end{corollary}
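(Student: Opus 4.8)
The plan is to prove the two implications separately; the content lies entirely in the implication ``$\topo$-bounded $\Rightarrow$ norm-bounded''. For the trivial direction, note that since $\topo\subseteq\topo_{\norm{\cdot}}$, every $\topo$-neighbourhood of $0$ is in particular a norm-neighbourhood of $0$. Hence a norm-bounded set, being absorbed by every norm-neighbourhood of $0$, is absorbed by every $\topo$-neighbourhood of $0$ as well, so it is $\topo$-bounded. This requires no hypothesis beyond $\topo\subseteq\topo_{\norm{\cdot}}$.

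For the converse I would argue by contraposition. Suppose $B$ is not norm-bounded and choose $x_n\in B$ with $\norm{x_n}\geq 1$ and $\norm{x_n}\to\infty$. The key device is the rescaling $z_n:=\norm{x_n}^{-1/2}x_n$. Assume, for contradiction, that $B$ is $\topo$-bounded. Since $(x_n)\subseteq B$ is then $\topo$-bounded and the scalars $\norm{x_n}^{-1/2}$ tend to $0$, the standard fact that a bounded set multiplied by a null sequence of scalars is a null sequence (valid in any topological vector space) gives $z_n\to 0$ with respect to $\topo$. On the other hand $\norm{z_n}=\norm{x_n}^{1/2}\to\infty$, so $(z_n)$ is a $\topo$-null but norm-unbounded sequence.

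Such a sequence is precisely the object that is ruled out inside the proof of Proposition \ref{prop:uniqueness_mixed}, so I would reuse that argument verbatim. After passing to a subsequence with $\norm{z_n}>n$ and discarding repetitions so that the $z_n$ are distinct and nonzero, the norming property of $(X,\topo)'$ furnishes unit functionals $f_n$ with $\abs{\langle f_n,z_n\rangle}>n$. The set $K:=\{z_n\mid n\in\N\}\cup\{0\}$ is $\topo$-compact (as $z_n\to 0$) and carries a metrisable topology, being the continuous image of $\{\tfrac{1}{n}\mid n\in\N\}\cup\{0\}$ (\cite[Chapter IX, § 2.10, Proposition 17]{bourbaki1966}); hence $\overline{\operatorname{cx}}\,K$ is $\topo$-compact by the metric convex compactness property (\cite[Remark 4.1.b]{voigt1992}). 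The family $(f_n)$ is pointwise bounded on $\overline{\operatorname{cx}}\,K$, so by the uniform boundedness principle (\cite[Theorem 2.9]{rudin1991}) it is uniformly bounded there, contradicting $\abs{\langle f_n,z_n\rangle}\to\infty$. Therefore $B$ cannot be $\topo$-bounded, which is the contrapositive of the assertion.

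I expect the only genuine obstacle to be the passage from ``$\topo$-bounded but norm-unbounded set'' to ``a single $\topo$-null, norm-unbounded sequence'': this is exactly what the scalar rescaling $z_n=\norm{x_n}^{-1/2}x_n$ accomplishes, after which no new idea is needed beyond the uniform boundedness argument already carried out for Proposition \ref{prop:uniqueness_mixed}. As an alternative one could instead invoke $\topo=\gamma(\topo,\topo_{\norm{\cdot}})$ from Proposition \ref{prop:uniqueness_mixed} together with the general characterisation from Wiweger's theory \cite{wiweger1961} that the bounded sets of the mixed topology coincide with the norm-bounded ones; the self-contained route above has the advantage of not relying on that external fact.
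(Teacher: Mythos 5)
Your proof is correct, but it takes a genuinely different route from the paper's. The paper disposes of the nontrivial direction in one line: by Proposition \ref{prop:uniqueness_mixed} the hypotheses force $\topo = \gamma(\topo,\topo_{\norm{\cdot}})$, and Wiweger's theory (\cite[Proposition 2.4.1]{wiweger1961}) identifies the bounded sets of a mixed topology with the norm-bounded ones --- exactly the alternative you mention at the end. You instead give a self-contained argument: the rescaling $z_n=\norm{x_n}^{-1/2}x_n$ converts a $\topo$-bounded, norm-unbounded set into a single $\topo$-null, norm-unbounded sequence, and then the norming/compact-convex-hull/uniform-boundedness machinery from the proof of Proposition \ref{prop:uniqueness_mixed} yields the contradiction. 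The steps check out: the fact that a bounded set multiplied by a null sequence of scalars gives a null sequence holds in any topological vector space (use a balanced neighbourhood base), and sequential completeness of $(X,\topo)$ supplies the metric convex compactness property needed for $\overline{\operatorname{cx}}\,K$ to be compact via \cite[Remark 4.1.b]{voigt1992}. What your route buys: it avoids the external citation to \cite{wiweger1961}, and --- more interestingly --- it never uses the hypothesis that $(X,\topo)$ is a sequential space; only the norming property and the metric convex compactness property enter, so your argument actually proves the corollary under weaker assumptions (consistent with the remark after Proposition \ref{prop:uniqueness_mixed}). The paper's route, by contrast, needs the sequential-space assumption to obtain $\topo=\gamma$ before quoting Wiweger. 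The price is that you essentially duplicate the proof of Proposition \ref{prop:uniqueness_mixed} rather than reusing its statement.
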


\begin{proof}
    Since $\topo_{\norm{\cdot}}$ is finer than $\topo$, the $\topo_{\norm{\cdot}}$-bounded sets are $\topo$-bounded.
	The other inclusion follows from Proposition \ref{prop:mixing} and \cite[Proposition 2.4.1]{wiweger1961}. 
\end{proof}

The next proposition shows that bi-continuity of a semigroup is equivalent to being a $C_0$-semigroup with respect to the corresponding mixed topology. Thus, bi-continuous semigroups give rise to examples for subordination.

\begin{proposition}
\label{prop:from_bi-continuity_to_seq_continuity_and_back}
  Let $X$ be a Banach space with norm topology $\topo_{\norm{\cdot}}$, $\topo\subseteq \topo_{\norm{\cdot}}$ a Hausdorff locally convex topology, 
  $\gamma:=\gamma(\topo,\topo_{\norm{\cdot}})$ the mixed topology, and $(T_t)$ a semigroup in $L(X)$.
  \begin{enumerate}
    \item Let $(T_t)$ be a (locally) bi-continuous semigroup w.r.t.\ $\topo$.
	  Then $(X,\gamma)$ is sequentially complete, $(X,\gamma)'$ is norming for $X$ and $(T_t)$ is a (locally) sequentially equicontinuous $C_0$-semigroup on $(X,\gamma)$.
    \item Let $(X,\topo)$ be a sequentially complete C-sequential space, $(X,\topo)'$ norming for $X$ and $(T_t)$ a (locally) sequentially equicontinuous $C_0$-semigroup on $(X,\topo)$.
	  Then $\topo=\gamma$ and $(T_t)$ is a (locally) bi-continuous semigroup w.r.t.\ $\topo$.
  \end{enumerate}
\end{proposition}

\begin{proof} \leavevmode
  \begin{enumerate} 
    \item 
      By Lemma \ref{lem:mixed_topology}, $(X,\gamma)$ is sequentially complete and $(X,\gamma)'$ is norming for $X$.
      By \cite[Theorem 2.3.1]{wiweger1961} and properties (b) and (d) in the definition of (local) bi-continuity, $(T_t)$ is (locally) sequentially equicontinuous on $(X,\gamma)$.
      Let $x\in X$, $(t_n)$ in $[0,\infty)$ with $t_n\to 0$. Then by (b), $(T_{t_n}x)_n$ is bounded and $T_{t_n}x\to x$ in $(X,\topo)$. 
      By \cite[Theorem 2.3.1]{wiweger1961}, $T_{t_n}x\to x$ in $(X,\gamma)$. Thus, $(T_t)$ is a $C_0$-semigroup on $(X,\gamma)$.      
    \item 
      Due to Proposition \ref{prop:uniqueness_mixed} we have $\topo=\gamma$. Next, we only have to show (b) and (d) in the definition of (local) bi-continuity. 
      Clearly, (local) sequential equicontinuity implies (d), and (b) is a consequence of
      Corollary \ref{coro:bounded} and \cite[Proposition 3.6 (ii)]{federico2016}. 
  \end{enumerate}
\end{proof}

\begin{remark}
  Note that generators of bi-continuous semigroups are Hille--Yosida operators by \cite[Proposition 10]{kuehnemund2003} while densely defined Hille--Yosida operators are precisely the generators of $C_0$-semigroups on Banach spaces $(X, \tau_{\norm{\cdot}})$. 
  Moreover, in reflexive Banach spaces, Hille--Yosida operators are always densely defined.  
\end{remark}

\begin{lemma}
\label{lem:uniformly_bounded_equibounded}
	Let $X$ be a Banach space with norm-topology $\topo_{\norm{\cdot}}$, $\topo \subseteq \topo_{\norm{\cdot}}$ a Hausdorff locally convex topology on $X$ 
	such that $(X,\topo)' \subseteq (X,\topo_{\norm{\cdot}})'$ is norming for $X$ and $(X, \topo)$ is a sequentially complete C-sequential space. 
	Let $(T_t)$ be a locally bi-continuous semigroup. Then $(T_t)$ is uniformly bounded if and only if $(T_t)$ is equibounded on $(X,\topo)$.
\end{lemma}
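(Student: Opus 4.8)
The plan is to reduce both implications to the single structural fact, furnished by Corollary~\ref{coro:bounded}, that under the standing hypotheses the $\topo$-bounded subsets of $X$ coincide with the norm-bounded ones. First I would check that Corollary~\ref{coro:bounded} is applicable: $X$ is a Banach space, $\topo\subseteq\topo_{\norm{\cdot}}$ is a locally convex Hausdorff topology, $(X,\topo)'$ is norming for $X$, and $(X,\topo)$ is a sequentially complete sequential space --- these are exactly the hypotheses of the present lemma. Next I would restate equiboundedness on $(X,\topo)$ in a convenient form: since the seminorms $(\norm{\cdot}_p)_{p\in\mathcal{P}}$ generate $\topo$, the semigroup $(T_t)$ is equibounded on $(X,\topo)$ if and only if for every $\topo$-bounded set $B\subseteq X$ the orbit $\{T_t x \mid t\geq 0,\ x\in B\}$ is again $\topo$-bounded, because $\sup_{x\in B,\,t\geq 0}\norm{T_t x}_p<\infty$ for all $p$ is precisely the assertion that this orbit is bounded in $(X,\topo)$.

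For the implication that uniform boundedness yields equiboundedness, I would set $M:=\sup_{t\geq 0}\norm{T_t}<\infty$ and take an arbitrary $\topo$-bounded set $B$. By Corollary~\ref{coro:bounded} the set $B$ is norm-bounded, say $\norm{x}\leq R$ for $x\in B$, so $\norm{T_t x}\leq MR$ for all $t\geq 0$ and $x\in B$; hence the orbit of $B$ is norm-bounded and, applying Corollary~\ref{coro:bounded} once more, $\topo$-bounded. By the reformulation above this is exactly equiboundedness on $(X,\topo)$.

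For the converse I would apply equiboundedness to the concrete choice of $B$ equal to the closed norm-unit ball, which is norm-bounded and therefore $\topo$-bounded by Corollary~\ref{coro:bounded}. Equiboundedness then forces the orbit $\{T_t x \mid t\geq 0,\ \norm{x}\leq 1\}$ to be $\topo$-bounded, and Corollary~\ref{coro:bounded} upgrades this to norm-boundedness of the same orbit. But the norm-boundedness of $\{T_t x \mid t\geq 0,\ \norm{x}\leq 1\}$ is by definition the statement $\sup_{t\geq 0}\norm{T_t}<\infty$, i.e.\ uniform boundedness.

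The substance of the argument lies entirely in Corollary~\ref{coro:bounded}, so the only points requiring care are the verification of its hypotheses and the elementary but essential translation between the seminorm-wise formulation of equiboundedness in Definition~\ref{defi:equicontinoussg} and the $\topo$-boundedness of orbits. Once the equivalence ``$\topo$-bounded $=$ norm-bounded'' is in hand no further estimates or completeness arguments are needed, and the local bi-continuity of $(T_t)$ enters only to guarantee that each $T_t$ is a norm-bounded operator, so that $\norm{T_t}$ and $\sup_{t\geq 0}\norm{T_t}$ are meaningful.
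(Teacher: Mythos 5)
Your proposal is correct and follows essentially the same route as the paper: both arguments reduce the statement entirely to Corollary~\ref{coro:bounded}'s identification of $\topo$-bounded and norm-bounded sets, applied to the orbit of the norm-unit ball. Your version merely spells out in more detail the step (which the paper leaves implicit) that $\topo$-boundedness of the orbit of the unit ball already yields equiboundedness for arbitrary $\topo$-bounded sets, via one further application of Corollary~\ref{coro:bounded}.
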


\begin{proof}
	Let $B:=\{T_t x\in X \mid x\in X, \,  \norm{x}\leq 1,\, t\geq 0\}$.
	Then $(T_t)$ is uniformly bounded if and only if $B$ is bounded in $(X, \topo_{\norm{\cdot}})$.
	By Corollary \ref{coro:bounded}, this is equivalent to boundedness of $B$ in $(X,\topo)$, which in turn is equivalent to equiboundedness of $(T_t)$.
\end{proof}

We can now combine Proposition \ref{prop:subordinatedsg} and Corollary \ref{coro:generator_subordinated_semigroup} to easily obtain the following.

\begin{theorem}
\label{thm:bi-continuous}
  Let $X$ be a Banach space with norm-topology $\topo_{\norm{\cdot}}$, $\topo \subseteq \topo_{\norm{\cdot}}$ a Hausdorff locally convex topology on $X$
  such that $(X,\topo)' \subseteq (X,\topo_{\norm{\cdot}})'$ is norming for 
  $X$ and $(X, \gamma)$ is a C-sequential space where $\gamma:=\gamma(\topo,\topo_{\norm{\cdot}})$ is the mixed topology.
  Let $(T_t)$ be a uniformly bounded (locally) bi-continuous semigroup on $X$ w.r.t.\ $\topo$ with generator $-A$ and $f$ a Bernstein function. 
  Then the subordinated semigroup $(S_t)$ to $(T_t)$ w.r.t.\ $f$ is uniformly bounded and (locally) bi-continuous w.r.t.\ $\topo$ as well 
  and its generator $-f(A)$ is given by the sequential closure of $-A_f$.
\end{theorem}

\begin{proof}
  Let us first show that $(S_t)$ is uniformly bounded and (locally) bi-continuous.
  First, we apply Proposition \ref{prop:from_bi-continuity_to_seq_continuity_and_back} (a) 
  and obtain that $(T_{t})$ is a uniformly bounded (locally) sequentially equicontinuous $C_0$-semigroup on $(X,\gamma)$, 
  $(X,\gamma)$ is sequentially complete and $(X,\gamma)'$ norming for $X$. 
  Since $(X, \gamma)$ is a C-sequential space, an application of Proposition \ref{prop:from_bi-continuity_to_seq_continuity_and_back} (b) 
  yields that $(T_{t})$ is a uniformly bounded (locally) bi-continuous semigroup w.r.t.\ $\gamma$. 
  From Lemma \ref{lem:uniformly_bounded_equibounded} we derive that $(T_t)$ is equibounded on $(X,\gamma)$. 
  Proposition \ref{prop:subordinatedsg} yields that $(S_t)$ is a (locally) sequentially equicontinuous and equibounded $C_0$-semigroup on $(X,\gamma)$. 
  Another application of Proposition \ref{prop:from_bi-continuity_to_seq_continuity_and_back} (b) and then of Lemma \ref{lem:uniformly_bounded_equibounded} 
  provides that $(S_{t})$ is a uniformly bounded (locally) bi-continuous semigroup w.r.t.\ $\gamma$ and thus w.r.t.\ $\topo$ as well. 
  
  Let $-f(A)$ be the generator of $(S_t)$. By Corollary \ref{coro:generator_subordinated_semigroup} we have that the sequential closure of $A_f$ coincides with $f(A)$.  
\end{proof}

\begin{remark}
  If $(T_t)$ is a bi-continuous semigroup with generator $-A$, but maybe not uniformly bounded, one may be tempted to first rescale the semigroup, then apply Theorem \ref{thm:bi-continuous} and then rescale the subordinated semigroup again. If $f$ is a Bernstein function, then this procedure ends up with a generator being an extension of $-\bigl(f(A+\omega)-\omega\bigr)$, where $\omega$ is the rescaling parameter.
\end{remark}

Analytic semigroups (\cite{EngelNagel2000,lunardi1995}) provide a basic example for bi-continuous semigroups and the generators coincide. 

\begin{lemma}
	Let $(T_t)$ be an analytic or $C_0$-semigroup on a Banach space $(X, \topo_{\norm{\cdot}})$ with generator $-A$ which is at the same time bi-continuous w.r.t.\ the topology $\topo$ with generator $-\widetilde{A}$. 
	Then $A$ = $\widetilde{A}$. 
	\label{lemma:gen_anal_is_gen_bi}
\end{lemma}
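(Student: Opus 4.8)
The plan is to show that the two generators $-A$ and $-\tilde A$ agree as operators, i.e.\ $\mathcal{D}(A)=\mathcal{D}(\tilde A)$ and $Ax=\tilde Ax$ on this common domain. Since $(T_t)$ is the \emph{same} family of operators in both settings — a $C_0$- (or analytic) semigroup on $(X,\topo_{\norm{\cdot}})$ and a bi-continuous semigroup w.r.t.\ $\topo$ — the key observation is that the generator is determined by the same limit $\lim_{t\to 0\rlim}\tfrac{1}{t}(T_tx-x)$, only the sense of convergence differs: norm-convergence for $A$ versus $\topo$-convergence (equivalently, by Proposition \ref{prop:from_bi-continuity_to_seq_continuity_and_back}, $\gamma$-convergence) for $\tilde A$. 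Because $\topo\subseteq\gamma\subseteq\topo_{\norm{\cdot}}$, any norm-limit is automatically a $\gamma$-limit, so the inclusion $A\subseteq\tilde A$ (domain and action) is immediate.

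\textbf{The two inclusions.}

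First I would establish $A\subseteq\tilde A$. Let $x\in\mathcal{D}(A)$; then $\tfrac{1}{t}(T_tx-x)\to Ax$ in norm as $t\to 0\rlim$, hence also in the coarser topology $\gamma$, so $x\in\mathcal{D}(\tilde A)$ with $\tilde Ax=Ax$. This direction needs no analyticity and uses only $\topo\subseteq\topo_{\norm{\cdot}}$. The reverse inclusion $\tilde A\subseteq A$ is where the hypotheses split into two cases and where the real work lies. For the \emph{analytic} case, the standard route is via the resolvents: for $\lambda$ in a suitable right half-plane (large real part, say $\lambda>\omega$), the resolvent $(\lambda+A)^{-1}$ is given by the norm-convergent Laplace integral $\int_{0}^{\infty}e^{-\lambda t}T_t\,\d t$, while $(\lambda+\tilde A)^{-1}$ is given by the same integral interpreted as a $\gamma$-Pettis-integral (Lemma \ref{lem:properties_generator}(c) applied in $(X,\gamma)$). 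The plan is to show these two operators coincide on $X$, whence the generators, being determined by their resolvents, must agree.

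\textbf{The main obstacle.}

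The crux is to identify the two resolvent integrals. For the analytic semigroup the Laplace integral converges as an \emph{improper norm-Riemann integral}, while for the bi-continuous semigroup it is the $\gamma$-Pettis-integral constructed in Lemma \ref{lemma:integrability}. I expect to reconcile them by testing against functionals in $(X,\gamma)'$: for $x'\in(X,\gamma)'\subseteq(X,\topo_{\norm{\cdot}})'$ and $x\in X$, the scalar identity $\langle x',(\lambda+\tilde A)^{-1}x\rangle=\int_{0}^{\infty}e^{-\lambda t}\langle x',T_tx\rangle\,\d t=\langle x',(\lambda+A)^{-1}x\rangle$ holds by the defining property of the Pettis-integral together with the norm-convergence of the analytic Laplace integral. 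Since $(X,\gamma)'$ is norming for $X$ (Lemma \ref{lem:mixed_topology}(a)), the two vectors coincide, giving $(\lambda+A)^{-1}=(\lambda+\tilde A)^{-1}$ and hence $A=\tilde A$. In the alternative scenario where $(T_t)$ is a $C_0$-semigroup in norm, the argument is the same verbatim, replacing the analytic Laplace representation by the $C_0$-resolvent formula; no sectoriality is needed there. The delicate point throughout is ensuring the improper norm-integral and the Pettis-integral genuinely represent the \emph{same} element, which the norming property of $(X,\gamma)'$ resolves.
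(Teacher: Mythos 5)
Your proposal is correct and follows essentially the same route as the paper: the paper's entire proof consists of identifying $(1+A)^{-1}x$, computed as a norm-convergent Laplace integral, with the $\topo$-Pettis-integral $\int_{(0,\infty)}\exp^{-t}T_tx\,\d t$ representing $(1+\tilde{A})^{-1}x$ via Lemma~\ref{lem:properties_generator}, which is exactly your resolvent-identification step (justified by testing against functionals that separate points). Your preliminary inclusion $A\subseteq\tilde{A}$ via difference quotients is harmless but redundant once the resolvents are shown to coincide.
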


\begin{proof}
	We write $\topo_{\norm{\cdot}}$-$\int$ and $\topo$-$\int$ respectively in order to indicate w.r.t.\ which to\-po\-lo\-gy we integrate. 
	For $x \in X$ and $\lambda\in \rho(-A)\cap \rho(-\widetilde{A})$ we have 
	\[
		 (\lambda + A)^{-1} x =  \topo_{\norm{\cdot}} \text{-} \;\; \mathclap{\int\limits_{(0, \infty)}} \;\; \exp^{-\lambda t} T_t x \, \d t = \topo\text{-} \;\; \mathclap{\int\limits_{(0, \infty)}} \;\; \exp^{-\lambda t} T_t x \, \d t = (\lambda + \widetilde{A})^{-1} x,
	\]
	where the last equality follows from Lemma \ref{lem:properties_generator}. Thus, $A = \widetilde{A}$.
\end{proof}

\begin{example}
\label{ex:heat_semigroup}
  Let $X:=C_{\rm b}(\R^n)$ with supremum norm $\norm{\cdot}_\infty$, and $\topo_{\rm co}$ the compact-open topology, 
  i.e.\ the topology of uniform convergence on compact subsets of $\R^n$.
  Let $k\from (0,\infty)\times\R^n\to \R$, 
  \[k(t,x):=k_t(x):=\frac{1}{(4\pi t)^{n/2}} e^{-\frac{\norm{x}^2}{4t}} \quad(t>0, x\in\R^n)\]
  be the Gau{\ss}-Weierstra{\ss} kernel. For $t\geq 0$ define $T_t\in L(X)$ by
  \[T_tf := \begin{cases}
		f &, t=0,\\
		k_t * f &, t>0.
	    \end{cases}\]
  Then $(T_t)$ is a uniformly bounded analytic semigroup with generator $-A=\Delta$ on the domain 
  $\mathcal{D}(A) = \{ f \in C_{\rm b}(\R^n) \mid \forall p \geq 1: \, f \in W^{2,p}_{\rm loc} (\R^n) , \, \Delta f \in C_{\rm b} (\R^n) \}$ 
  (for the case $n\geq 2$; in case $n=1$ we have $\mathcal{D}(A) = C^2_{\rm b}(\R)$) in $C_{\rm b}(\R^n)$; cf.\ \cite[Propositions 2.3.1, 4.1.10]{BertoldiLorenzi2007}.
  It is locally bi-continuous w.r.t.\ $\topo_{\rm co}$ (\cite[Example 1.6]{Kuehnemund2001}) and by Lemma \ref{lemma:gen_anal_is_gen_bi} its generator is the operator $-A$ as introduced above. 
  Let $\alpha\in (0,1)$ and $f\from (0,\infty)\to [0,\infty)$ be defined by $f(x):=x^\alpha$ for all $x>0$.
  Let $(S_t)$ be the subordinated semigroup of $(T_t)$ w.r.t.\ $f$.
  Then $(S_t)$ is a uniformly bounded and locally bi-continuous semigroup, and the generator $-f(A)$ of $(S_t)$ is given by the sequential closure of $-(-\Delta)^{\alpha}$, i.e.\ the fractional Laplacian in $C_{\rm b}(\R^n)$, 
  as introduced in \cite[Sections 1.4, 5.6]{martinez2001}. 
\end{example}
\begin{proof}
We only need to prove the part on $(S_{t})$ which directly follows from Theorem \ref{thm:bi-continuous} 
since $(X,\gamma)$ with the mixed topology $\gamma:=\gamma(\topo_{\rm co},\topo_{\norm{\cdot}_\infty})$ is a C-sequential space by Proposition \ref{prop:mixed_topology_sequential}.
\end{proof}

Further examples of semigroups $(T_t)$ being strongly continuous for mixed topologies can be found e.g.\ in \cite{GoldysKocan2001}.

\begin{remark}
 The situation of Example \ref{ex:heat_semigroup} can be generalised. Namely, let $A$ be a sectorial operator such that $-A$ generates an analytic semigroup which is at the same time bi-continuous. 
 One can consider fractional powers $A^{\alpha}$, $\alpha \in (0,1)$, either by means of the standard sectorial functional calculus (see \cite[Chapter 3]{haase2006}) (for this one does not actually need that $-A$ generates a semigroup) or by using the methods from this paper. 
 Even without the assumption of dealing with a strongly continuous semigroup, one can still use \eqref{eq:definition_A_f}. 
 This still works since an analytic semigroup is always strongly continuous on $D := \overline{\mathcal{D}(A)}$. 
 It is essentially equivalent to using the Balakrishnan formula, see \cite[Proposition 3.1.12]{haase2006} for it and \cite[Proposition 3.2.1]{martinez2001} for the relation between the mentioned formulae. 
 Neither of the two approaches in general allows to obtain the `full' fractional power $A^{\alpha}$ defined by the sectorial calculus but only $(A_D)^{\alpha}$, the fractional power of $A_D$ which is the part of $A$ in the subspace $D$. 
 Unless $A$ is densely defined, $A_D$ is a proper restriction of $A$ which follows for example from \cite[Corollary 1.1.4 (iv)]{martinez2001}. 
 
 All those things are also true in rather general Hausdorff locally convex spaces, cf. \cite[Proposition 4.1.13, 4.1.22]{meichsner2021}. 
 
 For an operator $A$ as above this means that the here presented approach yields the same fractional powers as its sectorial functional calculus does in the Banach space $X$.  
\end{remark}

\subsection{Transition Semigroups for Markov Processes}

Let $(\Omega,\calF,\P)$ be a probability space and  $(E,\calE)$ a measurable space. 
Let us recall the notions of (normal) Markov processes and their associated transition semigroups.

\begin{definition}
  A tuple $\X:=(\Omega,\calF,\P,(\calF_t)_{t\geq 0},(X_t)_{t\geq 0}, E, \calE, (\P_x)_{x\in E})$ is called a \emph{Markov process} if
  $(X_t)$ is an adapted process on $(\Omega,\calF,\P)$ w.r.t.\ the filtration $(\calF_t)$ with values in $E$ and $(\P_x)$ is a family of probability measures on $(\Omega,\calF)$ such that
  $E\ni x\mapsto \P_x(X_t\in B)$ is measurable for all $B\in \calE$ and $\P_x(X_{t+s}\in B \mid \calF_s) = \P_{X_t}(X_s\in B)$ $\P_x$-a.s.\ for all $x\in E$, $t,s\geq 0$, and $B\in \calE$.
  A Markov process $\X$ is called \emph{normal} if $\{x\}\in \calE$ for all $x\in E$ and $\P_x(X_0=x)=1$ for all $x\in E$.
\end{definition}

We write $B_{\rm b}(E)$ for the bounded measurable (scalar) functions on $E$.

\begin{definition}
Let $\X:=(\Omega,\calF,\P,(\calF_t),(X_t), E, \calE, (\P_x)_{x\in E})$ be a Markov process.
For $t\geq 0$ we define $T_t\from B_{\rm b}(E)\to B_{\rm b}(E)$ by 
\[T_t f(x):= \E_x(f(X_t)) \quad(f\in B_{\rm b}(E), x\in E),\]
where $\E_x$ is the expectation with respect to $\P_x$.
We call $(T_t)$ the \emph{transition semigroup} associated with the Markov process.
\end{definition}

Transition semigroups $(T_t)$ for Markov processes satisfy a semigroup law, while normality of the Markov process yields that $T_0=I$. We state this well-known fact as a lemma.

\begin{lemma}
Let $(\Omega,\calF,\P,(\calF_t),(X_t), E, \calE, (\P_x)_{x\in E})$ be a normal Markov process with transition semigroup $(T_t)$. 
Then $(T_t)$ is a semigroup.
\end{lemma}

Now, let $E$ be a completely regular Hausdorff space, $\calE:=\calB(E)$ the Borel $\sigma$-field, and $\X$ a normal Markov process with transition semigroup $(T_t)$. Let us assume that $C_{\rm b}(E)$ is invariant for $(T_t)$, i.e. $T_t(C_{\rm b}(E))\subseteq C_{\rm b}(E)$ for all $t\geq 0$. Sometimes, $\X$ is then called a \emph{$C_{\rm b}$-Feller process} and $(T_t)$ a \emph{$C_{\rm b}$-Feller semigroup}, and we will adopt this notion. We may then try to restrict the transition semigroup $(T_t)$ to $C_{\rm b}(E)$.

We now introduce the strict topology on $C_{\rm b}(E)$ as in \cite{sentilles1972}.
\begin{definition}	
	Let $\beta E$ be the Stone--\v{C}ech compactification of $E$. For $Q\subseteq \beta E\setminus E$ compact we define
	\[C_Q(E):=\{g|_E\in C_{\rm b}(E) \mid\; g\in C(\beta E), g|_Q = 0\}.\]
	Then $C_Q(E)$ induces a topology $\beta_Q$ on $C_{\rm b}(E)$ via the seminorms $\norm{\cdot}_g$ for $g\in C_Q(E)$ given by $\norm{f}_g := \norm{gf}_\infty$.
	Then the \emph{strict topology} $\beta$ on $C_{\rm b}(E)$ is defined to be the inductive limit topology for $(C_Q(E), \beta_Q)_{Q\subseteq \beta E\setminus E \,\text{compact}}$.
\end{definition}

\begin{remark}
\label{rem:strict_topology}
    \begin{enumerate}
    \item 
    Equipped with the strict topology $\beta$, the space $(C_{\rm b}(E),\beta)$ is a Hausdorff locally convex space \cite[Theorem 2.1(b)]{sentilles1972}.
    \item
    If $E$ is $\sigma$-compact and locally compact, or Polish (i.e.\ complete metrisable and separable), then $\beta = \gamma(\topo_{\rm co},\topo_{\norm{\cdot}_\infty})$ and the strict topology is induced by the seminorms from Remark \ref{remark:mixed_topo} (c), by \cite[Theorem 2.4, Theorem 9.1]{sentilles1972}.
    \item
    If $E$ is $\sigma$-compact and locally compact, then the strict topology $\beta$ on $C_{\rm b}(E)$ is induced by the seminorms
    $\norm{\cdot}_g$ for $g\in C_0(E)$ given by $\norm{f}_g := \norm{gf}_\infty$ (see Example \ref{ex:strict}).
    \end{enumerate}
\end{remark}

Let us collect some results on $C_{\rm b}$-Feller semigroups on $(C_{\rm b}(E), \beta)$. To this end, we write $L_0((\Omega,\calF,\P);E)$ for the space of $\P$-equivalence classes of strongly measurable functions from $\Omega$ to $E$ equipped with the topology of convergence in measure.

Now, let $E$ be a complete metric space.

\begin{proposition}
\label{prop:transition_sg}
    Let $\X:=(\Omega,\calF,\P,(\calF_t),(X_t), E, \calE, (\P_x)_{x\in E})$ be a $C_{\rm b}$-Feller process and $(T_t)$ the associated $C_{\rm b}$-Feller semigroup on $(C_{\rm b}(E),\beta)$.
    \begin{enumerate}
        \item $(T_t)$ is a $C_0$-semigroup on $(C_{\rm b}(E),\beta)$ if and only if for all $f\in C_{\rm b}(E)$ we have $T_tf \to f$ uniformly on compact subsets of $E$.
        \item If $(T_t)$ is a $C_0$-semigroup on $(C_{\rm b}(E),\beta)$, then $(T_t)$ is locally equicontinuous, hence also locally sequentially equicontinuous.
        \item $(T_t)$ is equibounded.
        \item Let $X\from [0,\infty)\times E\to L_0((\Omega,\calF,\P);E)$, $X(t,x):=X_t$ where $X_0=x$, be continuous. Then $(T_t)$ is a $C_0$-semigroup on $(C_{\rm b}(E),\beta)$.
    \end{enumerate}
\end{proposition}

\begin{proof}
    (a) and (b) follow from \cite[Theorem 4.4]{kunze2009}.
    To show (c), first note that $(T_t)$ is contractive in $(C_{\rm b}(E),\tau_{\norm{\cdot}_\infty})$. Since $\beta$ and $\tau_{\norm{\cdot}_\infty}$ share the same bounded sets, $(T_t)$ is equibounded.
    (d) is a consequence of \cite[Theorem 5.2]{kunze2009}.
\end{proof}

Proposition \ref{prop:transition_sg} yields that as soon as the Markov process is continuous in time and initial value, then the corresponding transition semigroup satisfies all the properties needed for subordination.

\begin{theorem}
    Let $\X:=(\Omega,\calF,\P,(\calF_t),(X_t), E, \calE, (\P_x)_{x\in E})$ be a $C_{\rm b}$-Feller process and $(T_t)$ the associated $C_{\rm b}$-Feller semigroup on $(C_{\rm b}(E),\beta)$ with generator $-A$. Assume that $X\from [0,\infty)\times E\to L_0((\Omega,\calF,\P);E)$, $X(t,x):=X_t$ where $X_0=x$, is continuous. Let $f$ be a Bernstein function. Then $-f(A)$ is the generator of the subordinated semigroup.
\end{theorem}

\begin{proof}
    This is a direct consequence of Proposition \ref{prop:transition_sg} and Corollary \ref{coro:generator_subordinated_semigroup}.
\end{proof}

\bibliography{Subordination_BiStetigkeit}
\bibliographystyle{plainnat}
\end{document}